\newcolumntype{P}[1]{>{\centering\arraybackslash}p{#1}}
\newcolumntype{M}[1]{>{\centering\arraybackslash}m{#1}}
\newcommand{\cd}{\partial}
\newcommand{\cdbar}{\bar{\partial}}
\newcommand{\cdbarst}{\bar{\partial}^*}
\newcommand{\bC}{\mathbb C}
\newcommand{\beq}{\begin{equation}}
\newcommand{\eeq}{\end{equation}}
\newcommand{\norm}{|\!|}
\newcommand{\normm}{|\!|\!|}
\newcommand*{\DivideLengths}[2]{%
	\strip@pt\dimexpr\number\numexpr\number\dimexpr#1\relax*65536/\number\dimexpr#2\relax\relax sp\relax
}
\title{Local algebraicity and localization of the Bergman kernel on Stein spaces with finite type boundaries}
\date{}
\thanks{The first author was partially supported by the NSF through the grant DMS-2154368. The third author was partially supported by the NSF through the grant DMS-2045104.}
\numberwithin{equation}{section}
\newtheorem{theorem}{Theorem}
\newtheorem*{theorem*}{Theorem}
\newtheorem{proposition}{Proposition}[section]
\newtheorem{remark}{Remark}[section]
\newtheorem{definition}{Definition}[section]
\newtheorem{lemma}{Lemma}[section]
\newtheorem{claim}{Claim}[section]
\newtheorem*{claim*}{Claim}
\begin{document}

\author{Peter Ebenfelt}
\address{Department of Mathematics, University of California San Diego, La Jolla, CA, 92093, USA}
\email{pebenfel@math.ucsd.edu}
\author{Soumya Ganguly}
\address{Department of Mathematics, University of California San Diego, La Jolla, CA, 92093, USA}
\email{s1gangul@ucsd.edu}
\author{Ming Xiao}
\address{Department of Mathematics, University of California San Diego, La Jolla, CA, 92093, USA}
\email{m3xiao@ucsd.edu}

\begin{abstract}
		On a two dimensional Stein space with isolated, normal singularities, smooth finite type boundary, and locally algebraic Bergman kernel, we establish an estimate on the type of the boundary in terms of the local algebraic degree of the Bergman kernel. As an application, we characterize two dimensional ball quotients as the only Stein spaces with smooth finite type boundary and locally rational Bergman kernel.
A key ingredient in the proof of the degree estimate is a new localization result for the Bergman kernel of a pseudoconvex, finite type domain in a complex manifold.
	\end{abstract}
	
	\maketitle


\section{Introduction}

The purpose of this paper is two-fold. The first objective, and the original motivation for the work, is to generalize results from \cite{ebenfelt2021algebraicdegfinitetype} regarding algebraic Bergman kernels on domains in $\bC^2$ to domains in 2-dimensional Stein spaces. The second, needed to resolve the first, is to establish a result on localization of the Bergman kernel on domains with smooth finite type boundaries in complex manifolds. The second objective is of independent interest and generalizes the classical work of 
Fefferman \cite{Feffermanasympbyfeff1974} for strictly pseudoconvex domains in $\bC^n$, as well as other work in more general situations (see below). The proof of the localization result follows the general approach in \cite{Feffermanasympbyfeff1974, HuangLiBergmanEinstein} but requires some new technology, including the introduction of a new negative Sobolev norm.

In complex analysis and geometry, the Bergman kernel and metric have been fundamental objects for our understanding of the geometry  of complex manifolds since the classical work of Bergman and  \cite{BergmanpaperonBergmankernel1}, \cite{BergmanpaperonBergmankernel2} and Kobayashi \cite{GeometryboundeddomKobayashi1959}.
For example, the resolution of Cheng's Conjecture \cite{FuWongChengConjdim2}, \cite{Nemirovski_2006_ConjChengRama}, \cite{HuangXiao2021Chengdim3} characterizes the unit ball $\mathbb{B}^n \subset \mathbb{C}^n$, up to biholomorphisms,  as the only smoothly bounded strictly pseducoconvex domain in $\bC^n$ with a Bergman metric that is also K\"ahler-Einstein.  Further developments on generalizations of Cheng's Conjecture can be found in, e.g., \cite{HuangLiBergmanEinstein}, \cite{ebenfel2020classificationstein}, \cite{gangulysinhachengstein2d} and \cite{savalexiao2023kahlereinstein}.
In a different direction,  together with H. Xu the first and third authors \cite{ebenfelt2020algebraicity, ebenfelt2021algebraicdegfinitetype} studied bounded pseudoconvex domains $G$ in $\bC^2$ whose Bergman kernels are algebraic. In \cite{ebenfelt2020algebraicity}, it is proved that if the boundary of $G$ is strongly pseudoconvex and its Bergman kernel is algebraic, then there is an algebraic biholomorphism from $G$ to $\mathbb{B}^2$. This conclusion fails if the strong pseudoconvexity of $\partial G$ is dropped in view of calculations due to D'Angelo \cite{DA78} showing that complex ellipsoids of the form
\begin{equation*}
|z|^2+|w|^{2p}<1
\end{equation*}
have algebraic Bergman kernels of degree $p$. Nevertheless, in \cite{ebenfelt2021algebraicdegfinitetype}, it is proved that a smoothly bounded pseudoconvex domain $G$ (without any additional assumptions on the boundary $\partial G$) in $\mathbb{C}^2$ has a {\em rational} Bergman kernel if and only if there is a rational biholomorphism from $G$ to $\mathbb{B}^2$. To prove this, a key step is to establish an optimal bound for the type of the boundary of the domain in terms of the algebraic degree of the Bergman kernel, assuming both of these quantities are finite. We mention that, in general, there are several different notions of type of a boundary point such as D'Angelo $k$-type and commutator type (or H\"ormander--Kohn--Bloom--Graham type). In two dimensions, however, all these notions coincide (see, e.g., \cite{ebenfeltCRgeometrybook}).

In this paper, we generalize the aforementioned results in \cite{ebenfelt2021algebraicdegfinitetype} to domains in 2-dimensional Stein spaces with isolated singularities. The following is the first main result of this paper.

\begin{theorem}\label{typedegreeestimate}
  Let $\Omega'$ be  a two dimensional, normal Stein space  with isolated singularities and let $\Omega$ be a precompact, pseudoconvex domain in $\Omega'$ with smooth and finite type boundary $\partial \Omega$.

  \begin{enumerate}
  	
   \item [{\rm (i)}]If $\xi$ is a boundary point of $\Omega$ and there is a local coordinate chart of $\Omega'$ at $\xi$ in which the Bergman kernel form $K_\Omega(z,\bar z)$ of $\Omega$ is algebraic with algebraic degree $d$, then the type $r(\xi)$ of the boundary point $\xi$ satisfies $r(\xi) \leq 2 d$.

    \item [{\rm (ii)}] If, for every boundary point $\xi$ of $\Omega$, there is a local coordinate chart of $\Omega'$ at $\xi$ in which
    the  Bergman kernel form $K_\Omega(z,\bar z)$ of $\Omega$ is rational (algebraic of degree $d=1$), then $\Omega$ is biholomorphic to a finite ball quotient $\mathbb{B}^2/\Gamma$, where $\Gamma$ is a finite, fixed point free, unitary subgroup of $Aut(\mathbb{B}^2)$.

\end{enumerate}

\end{theorem}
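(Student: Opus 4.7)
\medskip

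\noindent\textbf{Proof sketch.}

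The plan is to prove (i) first, reducing to the $\bC^2$ computation of \cite{ebenfelt2021algebraicdegfinitetype} via the new localization theorem, and then to derive (ii) by combining (i) with the Stein-space version of Cheng's conjecture in complex dimension two.

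Because $\partial\Omega$ is smooth, every boundary point is a smooth point of $\Omega'$, so a neighborhood of $\xi$ in $\Omega'$ can be identified with an open set $V\subset\bC^2$, and $\Omega\cap V$ becomes a smoothly bounded pseudoconvex piece of a domain in $\bC^2$ with boundary point $\xi$ of finite type $r(\xi)=2p$ (all standard notions of type coincide in complex dimension two). The first step is to invoke the new localization theorem, whose purpose is exactly to allow one to compare $K_\Omega(z,\bar z)$ with $K_{\Omega\cap V}(z,\bar z)$ near $\xi$ up to a well-controlled error, regular enough to leave the singular fractional part of the boundary asymptotic expansion unchanged. This is the step that consumes the Stein-space hypothesis: the potential contribution from the interior singularities of $\Omega'$ is absorbed into the error term.

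For part (i), once the problem is localized the argument mirrors that of \cite{ebenfelt2021algebraicdegfinitetype}. Along the inward normal to $\partial\Omega$ at $\xi$, the Bergman kernel of the local model $K_{\Omega\cap V}(z,\bar z)$ has an asymptotic expansion containing fractional-power terms with denominator $p$, as one can read off by comparison with D'Angelo's ellipsoid $|z|^2+|w|^{2p}<1$, whose Bergman kernel is algebraic of degree exactly $p$. An algebraic function of degree $d$ admits only Puiseux expansions whose fractional exponents have denominators bounded by $d$, so the presence of a $1/p$ fractional exponent forces $p\le d$, yielding $r(\xi)=2p\le 2d$.

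For part (ii), applying (i) with $d=1$ at every boundary point gives $r(\xi)\le 2$, so $\partial\Omega$ is strongly pseudoconvex. On such a boundary, Fefferman's expansion gives $K_\Omega=\phi\,\rho^{-3}+\psi\log\rho$ with $\phi,\psi$ smooth up to $\partial\Omega$ and $\rho$ a local defining function; local rationality at every boundary point rules out the transcendental $\log\rho$ term and forces $\psi|_{\partial\Omega}\equiv 0$. The vanishing of $\psi|_{\partial\Omega}$ is precisely the obstruction to the Bergman metric being K\"ahler--Einstein, and the two-dimensional Stein-surface version of Cheng's conjecture (see \cite{ebenfel2020classificationstein, gangulysinhachengstein2d, savalexiao2023kahlereinstein}) then identifies $\Omega$ with a ball quotient $\mathbb{B}^2/\Gamma$; the isolated singularity structure of $\Omega'$ together with $\Gamma\subset\operatorname{Aut}(\mathbb{B}^2)$ force $\Gamma$ to be finite, unitary, and act freely on $\mathbb{B}^2\setminus\{0\}$.

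The main obstacle is the localization theorem itself: the classical techniques of Fefferman \cite{Feffermanasympbyfeff1974} and the refinements in \cite{HuangLiBergmanEinstein} do not directly apply in the presence of interior singularities, and the new negative Sobolev norm announced in the abstract is what makes the localization argument go through. Once that tool is in place, the asymptotic and classification steps above follow by adapting the framework of \cite{ebenfelt2021algebraicdegfinitetype} to the Stein setting.
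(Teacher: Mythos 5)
Your overall architecture for part (i) is the right one (localize near $\xi$, use a boundary asymptotic expansion with fractional exponents, and bound the Puiseux denominators by the algebraic degree), but two essential ingredients are missing and one step as written would fail. First, you apply the localization theorem directly to $\Omega\subset\Omega'$, claiming the interior singularities are "absorbed into the error term"; the localization theorem is proved only for precompact domains in complex \emph{manifolds}, and $\Omega\setminus Sing(\Omega')$ is not precompact in $\Omega'\setminus Sing(\Omega')$. The paper first resolves the singularities by a blow-up $\pi\colon M'\to\Omega'$, observes that $K_\Omega=K_M$ for $M=\pi^{-1}(\Omega)$ because removing the exceptional subvariety does not change the Bergman space, and only then localizes (Lemma \ref{internaldomtouch} and Theorem \ref{locBergker}). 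Second, the statement that the local model's kernel "has fractional-power terms with denominator $p$, as one can read off by comparison with D'Angelo's ellipsoid" is not an argument: a general type-$r$ boundary point is not biholomorphic to the model ellipsoid, and Bergman asymptotics cannot be transferred by such a comparison. The actual input is the Hsiao--Savale expansion \cite{HsiaoSavaleBergmanexpansionfinitetype}, which gives $k_D=\rho^{-2-2/r}\bigl(\sum_{j=0}^{r-1}a_j\rho^{j/r}+O(\rho)\bigr)+B\log\rho$ along a transversal line with $a_0>0$; it is the guaranteed nonvanishing of the leading term with exponent $-2-2/r$ (denominator $r/2$ when $r$ is even) that, combined with the argument of \cite{ebenfelt2021algebraicdegfinitetype}, yields $r\le 2d$ rather than the stronger $r\le d$ your sketch would suggest. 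One must also handle the log term and the choice of the line $L$ so that the minimal polynomial restricts nontrivially, which is where the arguments of Theorem 1.5 of \cite{ebenfelt2021algebraicdegfinitetype} are invoked.

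For part (ii) your route is genuinely wrong at the decisive step. From $\psi|_{\partial\Omega}\equiv 0$ you conclude that the Bergman metric is K\"ahler--Einstein and then invoke the Cheng-conjecture classification; but vanishing of the log coefficient is a \emph{consequence} of the K\"ahler--Einstein property in those works, not an equivalent condition, and no result lets you deduce Einstein-ness of the Bergman metric from boundary vanishing of $\psi$. The correct chain, and the paper's, is: substitute Fefferman's expansion into the rationality relation and use Lemma 5.2 of \cite{ebenfelt2020algebraicity} to get that $\psi$ vanishes to \emph{infinite order} at $\partial\Omega$; then the two-dimensional resolution of the Ramadanov conjecture \cite{GrahamScalarboundaryinvariantsRamadanov, BoutetdeMonvelRamadanov} gives local sphericity of $\partial\Omega$; then Huang's theorem \cite{HuangBallquotientCRlinkiisolatedcomplexsing} identifies the compact spherical CR manifold $\partial\Omega$ with $\partial\mathbb{B}^2/\Gamma$ for a finite fixed-point-free $\Gamma\subset U(2)$; and finally Step 3 of the proof of Theorem 1.1 in \cite{ebenfelt2020algebraicity} extends this boundary equivalence to a biholomorphism $\Omega\cong\mathbb{B}^2/\Gamma$. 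Your sketch omits the infinite-order vanishing, the sphericity step, and the boundary-to-interior extension, and the K\"ahler--Einstein/Cheng shortcut you propose in their place does not go through.
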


\bigskip

We make several remarks. First, we shall refer to a Bergman kernel form $K=K_\Omega(z,\bar z)$ (on the diagonal) such that there is a local coordinate chart at $\xi\in\partial\Omega$ in which $K$ is algebraic with algebraic degree $ \leq d$ as {\em locally algebraic} at $\xi$ of degree $\leq d$. It is (essentially) proved in \cite{ebenfelt2020algebraicity} that if the boundary $\partial\Omega$ is strongly pseudoconvex and $K$ is locally algebraic, then the log-term in Fefferman's asymptotic expansion of $K$ vanishes to infinite order at $\partial\Omega$; thus, having a locally algebraic Bergman kernel form is a strong  (non-generic) condition. Second, we mention that the degree estimate in Theorem \ref{typedegreeestimate} (i) is sharp in view of the result by D'Angelo ({\em loc.\ cit.}) on the Bergman kernels of the complex ellipsoids mentioned above. Third, the converse in Theorem \ref{typedegreeestimate} (ii), namely that ball quotients have locally rational Bergman kernels was shown in
\cite{ebenfel2020classificationstein} (see also \cite{HuangLiBergmanEinstein}). Fourth, by an example in \cite[Section 6]{ebenfelt2020algebraicity}, the conclusion in Theorem \ref{typedegreeestimate} (ii) fails in higher dimensions; The example in \cite[Section 6]{ebenfelt2020algebraicity} is a domain in a 3-dimensional Stein space (with a single isolated singularity) that is not biholomorphic to a ball quotient, but whose Bergman kernel can be shown to be locally rational at every boundary point (a minor detail is left to the reader). We finally note that, as pointed out in \cite{ebenfelt2021algebraicdegfinitetype}, one cannot expect to have an upper bound on the algebraic degree of Bergman kernel in terms of finite type of the boundary. This is because the latter is a biholomorphic invariant while the algebraic degree is not, and can be made arbitrarily large by biholomorphic transformations. 

In the proof of Theorem \ref{typedegreeestimate}, besides utilizing techniques from \cite{ebenfelt2020algebraicity, ebenfelt2021algebraicdegfinitetype},  a key ingredient is to establish a localization result for the Bergman kernel of complex manifolds with pseudoconvex and finite type boundary. This result is of independent interest and is established for complex manifolds of any dimension $n\geq 2$. The result can be formulated as follows.

\begin{theorem}\label{thm:BergmanLoc}
Let $M'$ be a complex manifold of dimension $n\geq 2$ and $M\subset M'$ a precompact, pseudoconvex domain with smooth and finite D'Angelo 1-type boundary $\partial M$. Let $\xi\in \cd M$ and $U\subset M'$ be an open neighborhood of $\xi$. Then, there exists a domain $D\subset M$ with smooth and finite D'Angelo 1-type boundary, an open neighborhood $V\subset U$ of $\xi$ such that $D\cap V=M\cap V$, and a $C^\infty$-smooth $(n,n)$-form $\phi(z,\bar z)$ on $V$ such that
\begin{equation}
K_M(z,\bar z)=K_D(z,\bar z)+\phi(z,\bar z),\qquad z\in D\cap V.
\end{equation}
\end{theorem}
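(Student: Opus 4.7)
The approach is a Kerzman--Fefferman style localization adapted to finite D'Angelo 1-type boundaries in complex manifolds. I would first produce $D$ by a bumping construction: if $\rho$ is a smooth defining function for $M$, choose a smooth non-negative plurisubharmonic function $\tau$ on $M'$ that vanishes on some neighborhood $V \Subset U$ of $\xi$ and is strictly plurisubharmonic off $V$; a suitable smoothing of $\{\rho+\tau<0\}$ gives a precompact, smoothly bounded, pseudoconvex subdomain of $M$, of finite D'Angelo 1-type, coinciding with $M$ on $V$ (the type near $\xi$ is inherited from $\partial M$; elsewhere strict pseudoconvexity can be arranged).

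Next I would set up the localization identities. Choose $\chi \in C_c^\infty(V)$ with $\chi \equiv 1$ on a smaller neighborhood $V' \Subset V$ of $\xi$. For $w \in M \cap V'$, the $(n,n)$-form $\chi\,K_M(\cdot,\bar w)$ is supported in $V\cap M = V\cap D$ and extends by zero to a form in $L^2(D)$; its $\bar{\partial}$ equals $\bar{\partial}\chi\wedge K_M(\cdot,\bar w)$, supported on $V\setminus V'$ and hence away from both $\xi$ and the diagonal singularity of $K_M$. Let $v_w$ and $u_w$ be the canonical solutions of $\bar{\partial} v_w = \bar{\partial}\chi\wedge K_M(\cdot,\bar w)$ on $D$ and $\bar{\partial} u_w = \bar{\partial}\chi\wedge K_D(\cdot,\bar w)$ on $M$. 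Applying the Bergman projections and using $\chi\equiv 1$ on $V'$ yields, for $z\in V'$,
\begin{equation*}
K_M(z,\bar w)-v_w(z)=\int_D K_D(z,\bar\zeta)\,\chi(\zeta)\,K_M(\zeta,\bar w)\,dV(\zeta),
\end{equation*}
\begin{equation*}
K_D(z,\bar w)-u_w(z)=\int_M K_M(z,\bar\zeta)\,\chi(\zeta)\,K_D(\zeta,\bar w)\,dV(\zeta).
\end{equation*}
Since $K_M(\cdot,\bar w)\in L^2(D)$ is holomorphic on $D$, the reproducing property of $K_D$ gives the supplementary off-diagonal expression $v_w(z)=\int_D K_D(z,\bar\zeta)(1-\chi(\zeta))K_M(\zeta,\bar w)\,dV(\zeta)$, and an analogous formula holds for $u_w$.

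The regularity analysis then proceeds via Catlin's subelliptic estimates for the $\bar{\partial}$-Neumann problem on smooth pseudoconvex finite D'Angelo 1-type boundaries, combined with the pseudolocal nature of the Neumann operator: for each fixed interior $w$, $u_w$ and $v_w$ are $C^\infty$ up to $\partial M\cap V'$. The off-diagonal integral representations above, together with the off-diagonal $C^\infty$-regularity of the Bergman kernels of finite type domains, yield smoothness in $z$ up to $\partial M\cap V'$ of $K_M(\cdot,\bar w)-K_D(\cdot,\bar w)$ for each interior $w$.

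The main obstacle, and the point of the new technology, is upgrading this pointwise-in-$w$ boundary smoothness in $z$ to smoothness on the diagonal $z=w\to \partial M\cap V'$. As $w\to\partial M$, the kernels $K_M(\cdot,\bar w)$ and $K_D(\cdot,\bar w)$ fail to lie uniformly in any positive Sobolev space, so one cannot naively differentiate the localization identities in $\bar w$. The remedy is a negative Sobolev norm $\|\cdot\|_{-s}$ adapted to the subelliptic gain on finite type boundaries, dual to the positive-gain norm in Catlin's estimate, in which $\bar w\mapsto K_M(\cdot,\bar w)$ and $\bar w\mapsto K_D(\cdot,\bar w)$ depend smoothly on $\bar w$ up to $\partial M$. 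Testing the localization identities against this dual norm and bootstrapping on the subelliptic gain promotes the one-sided boundary smoothness in $z$ to joint smoothness in $(z,\bar w)$, producing the desired smooth $(n,n)$-form $\phi(z,\bar z)$.
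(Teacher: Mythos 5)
Your construction of $D$ and the fixed-$w$ localization identities are fine in outline, but the heart of the theorem is never reached: the difficulty is uniformity of the boundary estimates as the parameter $w$ tends to $\partial M$ and the resulting joint smoothness in $(z,\bar w)$, and your final paragraph replaces this with an unsubstantiated claim. There is no negative Sobolev norm in which $\bar w\mapsto K_M(\cdot,\bar w)$ or $\bar w\mapsto K_D(\cdot,\bar w)$ individually ``depend smoothly on $\bar w$ up to $\partial M$'' in any usable sense, and ``testing the localization identities against the dual norm and bootstrapping'' is not a mechanism that produces uniform bounds on all $\bar w$-derivatives. What actually works (and what the paper does in Theorem \ref{locBergker}) is: take the difference $\tilde f_w=\tilde K_M(\cdot,\bar w)-\chi_D\,\tilde K_D(\cdot,\bar w)$, observe that $\bar\partial \tilde f_w$ is supported on $\partial D\setminus\partial M$ --- compactly inside $M$ and at a fixed positive distance from the region where $w$ varies --- so that its $\|\cdot\|^{\dagger}_{-1}$ norm is bounded \emph{uniformly in} $w$ by Bell's off-diagonal bound for the model kernel $K_D$ (Claim \ref{boundednessgminus1}); combine this with a pseudolocal estimate for the $\bar\partial$-Neumann operator whose right-hand side involves only $\|\cdot\|^{\dagger}_{-1}$ of the data (Claim \ref{claim21}, which itself requires the subelliptic estimates, Boas's iteration scheme, and continuity of $N$ in the dagger norms, Appendices \ref{appendixA}--\ref{appendixC}); this yields $\|\xi\tilde f_w\|_{s}\le \tilde C_s$ with constants independent of $w$. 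Smoothness in $\bar w$ is then obtained by Kerzman's trick: differentiating the orthogonality relations $\int_M f_w\wedge\overline{h}=0$, $h\in A^2(M)$, under the integral sign shows $D^{\alpha}_{\bar w}\tilde f_w\perp A^2(M)$ with data of the same kind, the uniform estimate is repeated for every $\alpha$, and Sobolev embedding gives joint smoothness before restricting to the diagonal. None of these steps (the uniform dagger-norm bound, differentiation under the integral, uniformity of all constants in $w$) appears in your sketch, and they are precisely where the new norm earns its keep.

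Moreover, your choice of $\bar\partial$-data makes the uniformity problem strictly harder: $\bar\partial\chi\wedge K_M(\cdot,\bar w)$ is supported in the shell $V\setminus V'$, which meets $\partial M$, so controlling it uniformly as $w\to\xi$ requires off-diagonal boundedness and boundary regularity of $K_M$ itself --- for a domain in a general complex manifold this is not an off-the-shelf fact, it is of the same nature as the statement being proved and would itself require the manifold version of the pseudolocal estimates you are invoking. The paper's decomposition confines the singular support of the data to $\partial D\setminus\partial M$, so only the model kernel $K_D$ needs quantitative bounds, and $D$ is deliberately constructed \emph{inside a single coordinate chart} (Lemma \ref{internaldomtouch}, following Engli\v s and Bell), where Catlin's subelliptic estimates and Bell's kernel estimates apply verbatim as in $\bC^n$. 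Your alternative construction of $D$ by global plurisubharmonic bumping is also problematic: on an arbitrary complex manifold $M'$ nothing guarantees a plurisubharmonic $\tau$ with the stated properties exists on a neighborhood of $\overline M$, the smoothness and finite type of the glued boundary are asserted rather than arranged, and a globally bumped $D$ forfeits the chart-local structure that makes the estimates on $K_D$ available in the first place.
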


A localization result for the Bergman kernel, as in Theorem \ref{thm:BergmanLoc}, was first established by Fefferman \cite{Feffermanasympbyfeff1974} for smoothly bounded, strictly pseudoconvex domains in $\bC^n$. This result was extended by Engli\v s \cite{Englispseudolocalestimateondbargeneralpscvx} to hold on pseudoconvex domains in $\bC^n$ near finite type boundary points $\xi$. More recently, X. Huang and X. Li extended Fefferman's result to precompact domains with smooth strictly pseudovonvex boundaries in Stein manifolds. For our application in the proof of Theorem \ref{typedegreeestimate}, we need the generality afforded by Theorem \ref{thm:BergmanLoc}.

Theorem \ref{thm:BergmanLoc} follows immediately from Lemma \ref{internaldomtouch} and Theorem \ref{locBergker} below. For the proof, we first follow the framework of \cite{Feffermanasympbyfeff1974, HuangLiBergmanEinstein}. We need to introduce some new ideas for this approach to work in the finite type case. In particular, we introduce a new Sobolev-type norm for negative indices, which is used in our proof.

The paper is organized as follows.   We first prove the localization result Theorem \ref{thm:BergmanLoc} for the Bergman kernel in Section 2. Some crucial steps in this proof are established in three appendices (A, B, and C at the end of the paper).
We then give a proof of Theorem \ref{typedegreeestimate} in Section 3.

	\subsection*{Acknowledgement}
     The authors are grateful to Ioan Bejenaru, Harold Boas, Xiaoshan Li, Mei-Chi Shaw, Ziming Shi, Liding Yao, and Emil Straube for important insights regarding $\bar{\partial}$-Neumann problem, techniques in PDEs, and harmonic analysis.

	\section{Localization of the Bergman kernel}\label{genhuangli}
	In this section we prove a localization result for the Bergman kernel of a complex manifold with pseudoconvex and smooth, finite type boundary. There is such a localization result in \cite{Englispseudolocalestimateondbargeneralpscvx} for finite type, pseudoconvex domains in $\mathbb{C}^n$. However the approach in that work cannot be directly carried over to domains in complex manifolds and for our application purpose, we need to extend the result to the manifold case. Huang-Li already established a localization result for the Bergman kernel of complex manifolds with strongly pseudoconvex boundary in  \cite{HuangLiBergmanEinstein}. For our proof, we shall follow the basic approach in \cite{HuangLiBergmanEinstein}, but make necessary changes and adjustments for the context of finite type. We recall that there are two standard notions of finite type, one due to H\"ormander, Kohn, Bloom--Graham and the other due to D'Angelo. 
These notions coincide for a real hypersurface in $\mathbb{C}^2$ or, more generally, in a complex manifold of dimension two. Our localization result here is for $n \geq 2$ dimensions and we will consider finite D'Angelo 1-type, as it gives us the necessary and sufficient condition for existence of subelliptic estimates of the $\cdbar$-Neumann problem \cite{Catlin1983necessarysubellipticest, CatlinSubellipticestimate}
(see also \cite{CatlinDangelosubellpticestimate}). The precise definition of D'Angelo 1-type (henceforth, simply referred to as ``finite type") is not needed for our purposes, only the subelliptic estimates, and the interested reader is referred to \cite{D'AngeloBook} (see also \cite{ebenfeltCRgeometrybook}) for the definition.

We first recall some necessary preliminaries about complex manifolds and Bergman kernels. Let $M$ be a precompact domain in an $n$ dimensional complex manifold $M'$ with smooth pseudoconvex boundary of finite type (in the sense of D'Angelo). We fix a Hermitian metric $g'$ on the complex manifold $M'$. This gives us a pointwise inner product on $(p,q)$ forms and, more generally, on any tensor product of the tangent and cotangent bundles. We let $\nabla$ be the Levi-Civita connection of $g$. Fixing a metric and a connection enables us
to introduce natural Sobolev spaces for $(p,q)$ forms on $M$. We first define the $L^2$ inner product on the tensor bundles by $(f,h):=\int_M \langle f,h\rangle_{g'} dV_{g'}$, where $\langle\cdot,\cdot\rangle_{g'}$ denotes the pointwise inner product of the tensors $f$ and $h$ given by the metric $g'$. The corresponding $L^2$-norm will be denoted by $\norm f\norm_{0}=(f,f)^{1/2}$. We then define, for integers $s\geq 0$, the Sobolev norm $\norm f\norm^2_{s}=\sum_{k=0}^{s}\norm \nabla^k f\norm^2_{0}$.
We note that the norms so defined depend on the metric $g'$, but the norms corresponding to different metrics are all equivalent. For a $(p,q)$ form $f$ whose support is contained in a coordinate chart $(U,\psi)$, we note that the Sobolev $s$-norm is equivalent to that given by the standard Sobolev $s$-norm in $\bC^n$:
\beq
\norm f\norm^2_{s}\sim \sum_{|I|+|J|\leq s}\sum_i (-1)^{n^2/2}\int_{\psi(U\cap M)} \left|\frac{\partial^{|I|+|J|}}{\partial z^I\partial\overline {z^J}}(f_i\circ\psi^{-1})\right|^2 dz\wedge d\bar z,
\eeq
where $z$ is the local coordinate, $dz\wedge d\bar z=dz_1\wedge\ldots\wedge dz_n\wedge d\bar z_1\wedge\ldots\wedge d\bar z_n$, $f=\sum_if_i\omega^i$, and $\{\omega^i\}$ is a pointwise orthonormal basis of $(p,q)$ forms with respect to the metric $g'$ over $U$. For non-integral $s\geq 0$, we can define Sobolev norms of order $s$ by using a partition of unity and the standard Sobolev $s$-norms on $\bC^n$.
Let $\{U_{\gamma}\}_{\gamma \in \Gamma}$ be a locally finite covering of $M'$ by charts with coordinate mappings $\psi_{\gamma}: U_{\gamma} \to \mathbb{C}^n$, and let $\{\xi_{\gamma}\}$ be a partition of unity subordinate to $\{U_{\gamma}\}$. Let $\{\omega_{\gamma}^i\}$ be a pointwise orthonormal basis of $(p,q)$ forms with respect to the metric $g'$ over $U_{\gamma}$. Then, for $f$ a smooth $(p,q)$ form in $\overline{M}$, we define $\norm f\norm^2_{s}=\sum_{\gamma}\sum_{i \in I}\norm(\xi_{\gamma}f_i)\circ \psi_{\gamma}^{-1}\norm_s^2$, where $f=\sum_{i \in I}f_i \omega_{\gamma}^i$ in $U_{\gamma}$ and the latter $\norm\cdot\norm_s$ denotes the standard Sobolev $s$-norm in $\bC^n$.  The norms introduced in the latter way are less intrinsic and, for integral $s\geq 0$, do not coincide with, but are equivalent to, the more intrinsic ones introduced using the $L^2$ inner product and Levi-Civita connection above. Interested readers can find more details in the Appendix of \cite{FollandKohndbarneumanntheorybook}. We now define, for $s \geq 0$, the Sobolev spaces $H^{(p,q)}_s(M)$ to be the completion of the smooth $(p,q)$ forms in $\overline{M}$ with respect to $\norm{}\cdot \norm{}_{s}$. For $s=0$, we will often denote $H^{(p,q)}_0(M)$ by $L^2_{p,q}(M)$, which we will regard as a Hilbert space endowed with the inner product $( \cdot , \cdot )$ defined above. We may omit the $(p,q)$ in the notation and just denote the Sobolev spaces by $H_s(M)$ when the type of the form is clear.

Let $\Omega^{p,q}(\overline{M})$ be the space of smooth $(p,q)$ forms on $M$, which are smooth up to the boundary, and $\Omega^{p,q}_c(M)$ the subspace of $\Omega^{p,q}(\overline{M})$ consisting of forms having compact support in $M$.
Recall that, for a $(p,q)$ form $g$, the standard negative Sobolev norm, for $s>0$, is given by
\begin{align*}
	\norm{}g\norm{}_{-s}=\sup_{h \in \Omega_c^{p,q}(M)} \frac{|(g,h)|}{\norm{}h\norm{}_{s}}.
\end{align*}	
We also introduce a new negative Sobolev-type norm as
\begin{align*}
\norm{}g\norm{}_{-s}^{\dagger}=\sup_{h \in \Omega^{p,q}(\overline{M}) \cap Dom(\cdbarst)} \frac{|( g,h)|}{\norm{}h\norm{}_{s}}.
\end{align*}	
If we compare these two notions of negative Sobolev norms, we easily see that $\norm{}g\norm{}_{-s} \leq \norm{}g\norm{}^{\dagger}_{-s}$. We define $H^{\dagger}_{-s}(M)$ to be the completion of $\Omega^{p,q}(\overline{M})$ under $\norm{} . \norm{}^{\dagger}_{-s}$ norm. In particular, $H^{\dagger}_{-s}(M)$ includes every $f \in L^2_{p,q}(M)$. 
	
For $(n,0)$ forms, we can also get a metric independent description of the Sobolev space $L^2_{n,0}(M)$.  Note that $\Omega^{n,0}_c(M)$ comes with a natural $L^2$ inner product structure on it as follows:
	\begin{align}\label{l2innerproduct}
		(f,g)'=(-1)^{n^2/2}\int_M f \wedge \bar{g} \ \ \ \text{for all} \ f,g \in \Omega^{n,0}_c(M).
	\end{align}
It is not difficult, and left to the diligent reader, to check that the two inner products $(\cdot,\cdot)$ and $(\cdot,\cdot)'$ coincide for $(n,0)$ forms. We shall henceforth use only the notation $(\cdot,\cdot)$.
To define the Bergman kernel form, we proceed as follows. If we write $\Lambda^n(M)$ to be the space of holomorphic $(n,0)$ forms on $M$  equipped with the norm coming from \eqref{l2innerproduct}, then we can define the Bergman space on $M$ to be:
	\begin{align*}
		A^2(M) \coloneqq \{f \in \Lambda^n(M): \norm{}f\norm{}_{0} < \infty \}.
	\end{align*}
	One can show that $A^2(M)$ is a closed subspace of $L^2_{n,0}(M)$. The orthogonal projection $P: L^2_{n,0}(M) \to A^2(M)$ is called the Bergman projection on $M$. The reproducing kernel of this projection is known as the Bergman kernel (form) on $M$ which we denote by $K_M(z, \bar{w})$ for $z,w \in M$. Note that $K_M(z, \bar{z})$ is an $(n, n)$ form on $M$, and can also be equivalently expressed (as a series) in terms of an orthonormal basis of  $A^2(M)$. Fix  two local, holomorphic coordinate charts $(U,z)$ and $(V, w)$ of $M'$ around $z^* \in \bar{M}$ and $w^* \in \bar{M}$. We write  $z=(z_1,z_2,....z_n)$, $w=(w_1,w_2,....w_n)$, and $dz =dz_1 \wedge dz_2... \wedge dz_n$ and likewise for $d\overline{z}, dw$ and $d \overline{w}.$ Then within the charts $(U, z) \times (V, w)$, the Bergman kernel can be written
	 $$K_M(z,\overline{w})=k_M(z,\overline{w})dz \wedge \overline{dw}.$$
We write $K_M(z,\overline{w^*})=k_M(z,\overline{w^*})dz \wedge \overline{dw} |_{w^*}$ and $\tilde{k}_M(z,\overline{w^*})=k_M(z,\overline{w^*})dz.$
Note for a fixed $w_0 \in M$, $\tilde{k}_M(z,\overline{w_0})$ is a $(n,0)$ form on $M$, and we shall say $K_M(z,\overline{w_0})$ is $L^2$ integrable with respect to $z$ if:
	\begin{align*}
		(-1)^{n^2/2}\int_M \tilde{k}_M(z,\overline{w_0}) \wedge \overline{\tilde{k}_M(z,\overline{w_0})} < \infty .
	\end{align*}
Note the above notion of $L^2$ integrability with respect to $z$ does not depend on choice of local coordinate $w$.
	
We next state a lemma which will be needed for the setup of the localization result for the Bergman kernel. The lemma can be proved identically as Proposition 1.2 in \cite{Englispseudolocalestimateondbargeneralpscvx} (see also the lemma on page 470 in \cite{BelldiffBergker}), therefore we omit the proof.
	\begin{lemma} \label{internaldomtouch}
Let $p \in \partial M$ (which is in particular of finite type) and $(U,z)$ be a small holomorphic coordinate chart at  $p$ in $M'$. Then there exists a smoothly bounded pseudoconvex domain $D \subset U \cap M$ of finite type satisfying:
		\begin{align*}
			D \cap B(p,2 \delta)=M \cap B(p, 2 \delta),
		\end{align*}
		for some sufficiently small $0<\delta <1$, where $B(p, \epsilon)=\left\{q \in U: |z(q)|=\sqrt{\sum_{j=1}^{n}|z_j(q)|^2} < \epsilon \right\}$.
	\end{lemma}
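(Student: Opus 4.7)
The strategy is to realize $D$ as the sublevel set $\{\tilde\rho<0\}$ of a smooth plurisubharmonic function $\tilde\rho$ on $U$ that agrees with a local defining function of $M$ inside $B(p,2\delta)$ but becomes positive before reaching $\partial U$. The gluing device is the regularized maximum of two plurisubharmonic functions.

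In the chart $(U,z)$ with $z(p)=0$, choose $R>0$ so that $\overline{B(p,R)}\subset U$ and $2\delta<R/2$. Let $\rho$ be a smooth local defining function for $M$ in $U$. A standard modification (for example, replacing $\rho$ by $\rho\,e^{-K|z|^2}$ for large $K>0$, then shrinking the neighborhood) lets us assume that $\rho$ is plurisubharmonic in a neighborhood of $\partial M\cap\overline{B(p,R)}$; such a modification is available because $\partial M$ is smoothly Levi pseudoconvex. Pick a smooth, strictly plurisubharmonic function $\psi$ on $U$ (say, a suitable convex function of $|z|^2$) with $\psi<\inf_{\overline{B(p,2\delta)}}\rho-1$ on $\overline{B(p,2\delta)}$ and $\psi>1$ outside $B(p,R)$.

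Set
\[
\tilde\rho \coloneqq \max\nolimits_\epsilon(\rho,\psi),
\]
where $\max_\epsilon$ denotes the standard smooth regularized maximum, which equals $\max$ whenever its arguments differ by more than $\epsilon$ and preserves plurisubharmonicity. For $\epsilon>0$ small, $\tilde\rho=\rho$ on $\overline{B(p,2\delta)}$ and $\tilde\rho=\psi$ on $U\setminus B(p,R)$. Let $D\coloneqq\{z\in U:\tilde\rho(z)<0\}$. Then $D\cap B(p,2\delta)=M\cap B(p,2\delta)$; $D$ is relatively compact in $U$ because $\tilde\rho=\psi>0$ outside $B(p,R)$; $D$ is pseudoconvex because $\tilde\rho$ is plurisubharmonic on $U$; and $\partial D$ is smooth because $d\tilde\rho$ is a positive convex combination of the nonvanishing $1$-forms $d\rho$ and $d\psi$ in the transition strip (and equals one of them outside). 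Finally, $\partial D$ is of finite D'Angelo $1$-type: on $\partial D\cap B(p,2\delta)=\partial M\cap B(p,2\delta)$ this is the hypothesis on $M$, while on $\partial D\setminus B(p,2\delta)$ the strict plurisubharmonicity of $\psi$ forces strict pseudoconvexity of $\partial D$, hence type $2$ at every such point.

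The delicate points lie in the transition strip $\{|\rho-\psi|<\epsilon\}$, which sits inside $B(p,R)\setminus\overline{B(p,2\delta)}$: one must verify nonvanishing of $d\tilde\rho$ on $\{\tilde\rho=0\}$, plurisubharmonicity of $\tilde\rho$, and preservation of strict pseudoconvexity there. These are standard consequences of the regularized-max formalism used in Engli\v s's Proposition 1.2 (and in Bell's construction), and they transfer without essential change to a holomorphic chart on $M'$ since the entire construction is purely local.
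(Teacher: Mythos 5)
Your construction hinges on the claim that, after replacing $\rho$ by $\rho\,e^{-K|z|^2}$ and shrinking, one may assume the local defining function is plurisubharmonic in a full neighborhood of $\partial M\cap\overline{B(p,R)}$ ``because $\partial M$ is smoothly Levi pseudoconvex.'' This is a genuine gap: no such standard modification exists for weakly pseudoconvex boundaries. The Levi form of $\rho e^{-K|z|^2}$ (or of $\rho+A\rho^2$) at a boundary point gains only a term controlled by the \emph{complex-normal} component $|\langle\partial\rho,t\rangle|$, and absorbing the mixed error term $|t|\,|\langle\partial\rho,t\rangle|$ requires strict positivity of the tangential Levi form; the trick therefore works only at strictly pseudoconvex points. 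In fact it is known (Fornaess, Diederich--Fornaess) that smooth bounded pseudoconvex domains -- including finite type ones -- need not admit even a \emph{local} plurisubharmonic defining function near a weakly pseudoconvex boundary point. Since the plurisubharmonicity of $\rho$ is exactly what makes $\max_\epsilon(\rho,\psi)$ plurisubharmonic, the pseudoconvexity of your $D$ is unproven, and the same defect undermines your treatment of the transition strip: there the Levi form of $\tilde\rho$ on complex tangential directions of $\{\tilde\rho=0\}$ is a convex combination in which the $\rho$-contribution can be negative and the strictly positive $\psi$-contribution can carry an arbitrarily small weight, so neither pseudoconvexity, nor strict pseudoconvexity (hence finite type), nor even smoothness of $\partial D$ follows; note also that a positive convex combination of two nonvanishing $1$-forms can vanish, so your argument for $d\tilde\rho\neq0$ is not valid as stated.

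The construction the paper points to (Bell's lemma, Engli\v s's Proposition 1.2) avoids any plurisubharmonic defining function. One sets $\tilde\rho=\rho+\chi(|z-p|^2)$ with $\chi$ smooth, convex, vanishing on $(-\infty,(2\delta)^2]$ and increasing rapidly, and verifies the Levi condition directly \emph{on the boundary} $\{\tilde\rho=0\}$: for $t$ complex tangential there one has $|\langle\partial\rho,t\rangle|\lesssim\chi'|z-p|\,|t|$ and $\rho=-\chi$, so smoothness of $\rho$ and pseudoconvexity of $\partial M$ give $L_\rho(t)\gtrsim-(C\chi+CR\,\chi')|t|^2$, which is dominated by the gain $\chi'|t|^2+\chi''|\langle\bar z-\bar p,t\rangle|^2$ once the radius $R$ is small and $\chi$ is chosen with $\chi'\geq C'\chi$; finite type away from $\partial M$ then comes from strict pseudoconvexity where $\chi'$ is bounded below, together with stability of finite type (D'Angelo) near the transition set. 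If you want a self-contained proof, this quantitative boundary Levi-form argument is the missing ingredient; the regularized-maximum shortcut cannot replace it because its hypothesis (a plurisubharmonic $\rho$) is unavailable in the finite type setting.
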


\begin{theorem}\label{locBergker}
Let  $p \in \partial M$ and $D$ be as in Lemma \ref{internaldomtouch} and let $\delta$ be sufficiently small. Then the following  holds in
$B(p,\delta)$ for some $\phi(z, \bar{z}) \in C^{\infty}(B(p,\delta) \cap \overline{M}):$
		\begin{align}
			k_M(z,\bar{z})=k_D(z,\bar{z})+ \phi(z, \bar{z}) \ \text{for}\ z \in B(p,\delta) \cap M.
		\end{align}

	\end{theorem}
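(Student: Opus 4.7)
My plan is to follow the Fefferman--Kerzman framework, as adapted to complex manifolds by Huang--Li, with the classical $1/2$-subelliptic estimate replaced by Catlin--Kohn's $\varepsilon$-subelliptic estimate for the $\bar\partial$-Neumann problem on pseudoconvex domains of finite D'Angelo 1-type. Choose $\delta_0\in(0,\delta)$ and fix a cutoff $\chi\in C^\infty_c(B(p,2\delta_0))$ with $\chi\equiv 1$ on $B(p,\delta_0)$. For $w\in B(p,\delta_0/2)\cap M$, consider the $(n,0)$-form
\[
h_w(\zeta):=\chi(\zeta)\,\tilde k_D(\zeta,\bar w),
\]
extended by zero outside $B(p,2\delta_0)$. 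Lemma~\ref{internaldomtouch} ensures $h_w\in L^2_{n,0}(M)$, and since $\tilde k_D(\cdot,\bar w)$ is holomorphic, $\bar\partial h_w=(\bar\partial\chi)\wedge\tilde k_D(\cdot,\bar w)$ is a smooth $(n,1)$-form supported in $B(p,2\delta_0)\setminus B(p,\delta_0)$. Applying the standard identity $P_M=I-\bar\partial^*_M N_M\bar\partial$, one obtains
\[
P_M h_w(z)=\tilde k_D(z,\bar w)-R_M(z,w),\quad z\in B(p,\delta_0)\cap\bar M,
\]
where $R_M(z,w):=\bar\partial^*_M N_M\bigl[(\bar\partial\chi)\wedge\tilde k_D(\cdot,\bar w)\bigr](z)$.

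Performing the symmetric construction on $D$, I set $\tilde h_z(\eta):=\chi(\eta)\tilde k_M(\eta,\bar z)\in L^2_{n,0}(D)$ for $z\in B(p,\delta_0/2)\cap M$ and obtain
\[
P_D\tilde h_z(w)=\tilde k_M(w,\bar z)-R_D(w,z),\quad w\in B(p,\delta_0)\cap\bar D,
\]
with $R_D$ defined analogously in terms of $N_D$. Writing each Bergman projection as an integral against its reproducing kernel and using that the integrand is supported in $B(p,2\delta_0)$ where $M$ and $D$ coincide, a direct computation produces the reciprocity $P_M h_w(z)=\overline{P_D\tilde h_z(w)}$, up to a sign depending only on $n$. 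Combining this with the two displayed formulas and specializing to $w=z$ in local coordinates gives the pointwise identity
\[
k_M(z,\bar z)-k_D(z,\bar z)=\overline{R_D(z,z)}-R_M(z,z)
\]
on $B(p,\delta_0/2)\cap M$ (again up to the sign), reducing the theorem to showing that $R_M(z,z)$ and $R_D(z,z)$ extend to $C^\infty(B(p,\delta_0/2)\cap\bar M)$.

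This regularity statement is the main obstacle, and it is where the new technology enters. The input forms $(\bar\partial\chi)\wedge\tilde k_D(\cdot,\bar w)$ and $(\bar\partial\chi)\wedge\tilde k_M(\cdot,\bar z)$ are jointly $C^\infty$ on their $\zeta$-supports by the global regularity of the $\bar\partial$-Neumann operators on the finite type domains $M,D$ (Kohn), and these supports are disjoint from a neighborhood of $p$. Inserting cutoffs $\eta_1,\eta_2\in C^\infty(\bar M)$ with disjoint supports thus reduces the problem to showing that $\eta_1\,\bar\partial^*_M N_M\,\eta_2$ is infinitely smoothing up to the boundary, and similarly on $D$. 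In the strictly pseudoconvex case this pseudolocality is classical; in the finite type case each application of $N_M$ gains only $\varepsilon=\varepsilon(\partial M)>0$ Sobolev derivatives, so regularity must be bootstrapped by repeated application of the subelliptic estimate. Since $N_M$ and $\bar\partial^*_M$ are attached to Neumann boundary conditions, the intermediate duality pairings cannot be set against arbitrary compactly supported forms; they must be set against forms in $\Omega^{n,1}(\bar M)\cap\mathrm{Dom}(\bar\partial^*)$, which is exactly the class for which the newly introduced negative norm $\norm{}\cdot\norm{}^\dagger_{-s}$ is designed. Tracking the joint smoothness in the parameter $w$ via the off-diagonal smoothness of $\tilde k_D$ (and analogously in $z$ for $R_D$) then upgrades pointwise smoothness to joint regularity in $(z,w)$, and restriction to the diagonal completes the argument.
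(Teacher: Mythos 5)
Your reduction is a Kerzman-style symmetry argument rather than the route taken in the paper (which works with the single form $\tilde f_w=\tilde k_M(\cdot,\bar w)-\chi_D\,\tilde k_D(\cdot,\bar w)$, observes $\tilde f_w\perp A^2(M)$ and that $\bar\partial \tilde f_w$ is supported on $\partial D\setminus\partial M$, and then estimates $\xi\tilde f_w$ directly), and the reciprocity identity you compute is algebraically correct. However, there are two genuine gaps. First, your symmetric construction on $D$ needs the datum $(\bar\partial\chi)\wedge\tilde k_M(\cdot,\bar z)$ to be smooth (or at least bounded in suitable norms, together with its $\bar z$-derivatives) on the annulus \emph{uniformly as the parameter $z$ tends to $\partial M$ near $p$}. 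That is off-diagonal boundary regularity of the Bergman kernel of the manifold domain $M$ itself; it does not follow from global regularity of $N_M$ (``condition R'' only gives smoothness of $K_M(\cdot,\bar z)$ up to $\partial M$ for $z$ in compact subsets of $M$), and for domains in complex manifolds it is not available in the literature --- it is a statement of the same depth as the theorem you are proving, so as written your argument is circular at this point. Note that $\|\chi\tilde k_M(\cdot,\bar z)\|_{0}$ is not even uniformly bounded without such off-diagonal control. The paper's decomposition is arranged precisely so that the only kernel entering the inhomogeneous data is $k_D$, for which the needed off-diagonal bound is available from Bell's theorem because $D$ is a finite type domain sitting in a single coordinate chart of $\mathbb{C}^n$.

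Second, the step you call ``the main obstacle'' --- that $\eta_1\,\bar\partial^{*}N_M\,\eta_2$ is infinitely smoothing up to the boundary in the finite type, manifold setting, with constants uniform in the parameter $w$ --- is asserted, not proved. This pseudolocal estimate is exactly the paper's main technical content: one must first prove a cutoff subelliptic estimate in special boundary frames (Appendix \ref{appendixA}), then remove the term $\|\xi_1 Ng\|_{s}$ from its right-hand side by Boas's iteration scheme (Appendix \ref{appendixB}), and for that iteration one needs continuity of $N$ on the new spaces $H^{\dagger}_{-s}$ (Appendix \ref{appendixC}); only then does one obtain the estimate $\|\xi Ng\|_{s+\lambda}\le C_s(\|\xi_1 g\|_{s}+\|g\|^{\dagger}_{-1})$ of Claim \ref{claim21}, which is what converts ``support away from $p$'' into $C^\infty$ regularity near $p$. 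Saying that the regularity ``must be bootstrapped by repeated application of the subelliptic estimate'' and that the dagger norm ``is exactly the class for which the new norm is designed'' describes the strategy but supplies none of the estimates; in particular you give no mechanism for the uniformity in $w$ (in the paper this comes from the uniform bound $\|\tilde g_w\|^{\dagger}_{-1}\le C$ of Claim \ref{boundednessgminus1}, via Bell's theorem on $D$) nor for the joint regularity in $(z,\bar w)$ needed before restricting to the diagonal (the paper's Step 2, differentiating under the integral in $\bar w$ and repeating the argument). Until these points are supplied, the proposal is an outline of an alternative framework rather than a proof.
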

	
	\begin{proof} The proof uses the general theory and framework of the $\bar\partial$-Neumann problem and follows the approach in the proof of Proposition 3.1 in \cite{HuangLiBergmanEinstein}. A reader not familiar with the $\bar\partial$-Neumann problem is referred to, e.g., \cite{FollandKohndbarneumanntheorybook} for an introduction. We split the proof into two steps.
\medskip

\noindent\textbf{Step 1:} Fix the holomorphic chart $(U, w)$ at $p$ as specified in Lemma \ref{internaldomtouch}, and as before, we write $d\bar{w}|_w=d\bar{w}_1\wedge \cdots \wedge d\bar{w}_n|_w$ for  $w \in U$. Fix some $w \in B(p, \delta) \cap M$ and set
		\begin{align*}
			f_w(z) \coloneqq K_M(z,\bar{w})-K_D(z,\bar{w}) \chi_D(z) \ \text{for all} \ z \in M.
		\end{align*}
	    where $\chi_D$ is the characteristics function of $D$. We write $f_w(z) \coloneqq \tilde{f}_w(z) \wedge d\bar{w}|_w$ and  $\tilde{g}_w(z) \coloneqq \bar{\partial}\tilde{f}_w(z)$. Thus, $\tilde{f}_w(z)$ is an $L^2$ integrable (with respect to $z$ coordinate) $(n,0)$ form on $M$. By the reproducing property of Bergman kernel, $\tilde{f}_w(\cdot) \perp A^2(M)$ and $\tilde{g}_w$ is a $(n,1)$ form in the sense of distributions. We note that $\tilde{g}_w \in H^{\dagger}_{-1}(M)$, $supp(\tilde{g}_w(z)) \subset \cd D \setminus \cd M$, and  $(\tilde{g}_w, h)=(\cdbar \tilde{f}_w, h) \coloneqq (\tilde{f}_w(z), \cdbarst h)$ for $h \in \Omega^{n,0}(\overline{M}) \cap Dom(\cdbarst)$. 
	    
	    Let us write $\tilde{f}_w(z) \coloneqq \breve{f}_w(z) dz|_z$. If $M$ is defined by a smooth defining function $\{ \rho < 0 \}$, let $\hat{N}$ be the unit outward normal at $p$, i.e., $\vec{N}/||\vec{N}||$ where $\vec{N}=(\frac{\cd \rho}{\cd \bar{z}_1}(p), \ldots, \frac{\cd \rho}{\cd \bar{z}_n}(p))$. Let us choose $\psi(z)$ as the signed-Euclidean distance function to $\Gamma=\overline{\cd D \setminus \cd M}$ in  a small neighborhood $V$ of $\Gamma$ and we extend this smoothly to $M'$ such that $\psi<0$ in $D$, $\psi > 0$ in $M \setminus \bar{D}$, and $|\psi| \geq \epsilon_0$ outside this neighborhood $V$, for some $\epsilon_0>0$ small enough. We choose a sequence of positive numbers $\{\epsilon_j\}_{j \in \mathbb{N}}$ such that $\epsilon_j = 2^{-(j+1)} \epsilon_0$. Then we define, for $j \in \mathbb{N}$ large enough, $D_{\epsilon_j} \coloneqq \{\psi < - \epsilon_j\}$, and 
	    \begin{align*}
	    	h^j_w(z) \coloneqq \begin{cases}
	    		\breve{f}_w(z - \frac{\epsilon_0}{2^{j+3}}\hat{N}), \ z \in \overline{D_{\epsilon_j} \cap M}, \\
	    		\breve{f}_w(z), \ z \in  M \setminus \bar{D}, \\
	    		0 \ \text{otherwise.}
	    	\end{cases}
	    \end{align*} 
	    	Let $\{\chi_j\}$ be a sequence of smooth cutoff functions where for each $j$, $1-\chi_j$ is supported in $U$. Moreover we set $\chi_j$ to be identically 1 in $\{\psi \geq \epsilon_j\} \cup \{\psi \leq -2\epsilon_j\}$ and $0$ in $\{-\epsilon_j \leq \psi \leq \epsilon_j/2\}$. From here we define a sequence of $(n,0)$ forms as $\tilde{f}_{w,j}(z) \coloneqq h^j_w(z)\chi_j(z) dz$ that are smooth up to $\bar{M}$. It is easy to see that $\tilde{f}_{w,j} \to \tilde{f}_w$ in $L^2_{n,0}(M)$. Set $\tilde{g}_{w,j}(z)\coloneqq \bar{\partial}\tilde{f}_{w,j}.$ We note that $supp(\tilde{g}_w(z)) \subset \cd D \setminus \cd M$, and by our construction, $supp(\tilde{g}_{w,j}(z))$ is contained in $\{-2 \epsilon_j < \psi(z) < \epsilon_j\} \cap M$ for $j$ large enough. This implies that the Euclidean distance between boundary of $supp(\tilde{g}_{w,j}(z))$ from $supp(\tilde{g}_w(z))$ is at most $2^{-j}\epsilon_0$, for $j$ large enough.
	    
	    Moreover, we have
	    \begin{align}\label{convergegepsilon}
	    	\tilde{f}_{w, j} \to \tilde{f}_w \ \text{in}\ L^2_{n,0}(M) \implies \tilde{g}_{w,j} \to \tilde{g}_w \ \text{in}\ H^{\dagger}_{-1}(M).
	    \end{align}
        To see this, we take any $h \in \Omega^{n,0}(\overline{M}) \cap Dom(\cdbarst)$ and note $$|(\tilde{g}_{w,j}-\tilde{g}_w, h)| = |(\cdbar(\tilde{f}_{w,j}-\tilde{f}_w), h)| = |(\tilde{f}_{w,j}-\tilde{f}_w, \cdbarst h)| \leq \norm{}\tilde{f}_{w,j}-\tilde{f}_w\norm{}_0 \, \norm{}\cdbarst h\norm{}_0 \leq C' \norm{}\tilde{f}_{w,j}-\tilde{f}_w\norm{}_0 \, \norm{}h\norm{}_{1},$$ where the constant $C'$ is independent of $h$. Hence by the definition of $\norm{}\cdot \norm{}^{\dagger}_{-1}$ norm, we obtain
\begin{equation}\label{eqngwjmgw}
\norm{}\tilde{g}_{w,j}-\tilde{g}_w\norm{}^{\dagger}_{-1} \leq C' \norm{}\tilde{f}_{w,j}-\tilde{f}_w\norm{}_{0} \to 0,~\mathrm{as}~j \to \infty.
\end{equation}
    Now we need to prove the following pseudolocal estimate for the $\cdbar$-Neumann operator:
	
\begin{claim}\label{claim21}
	    		Let $N: L^2_{n,1}(M) \to L^2_{n,1}(M)$ be the $\bar{\partial}$-Neumann operator. Let the point $p \in \partial M$ have finite type $m$ and $g =\cdbar f$, for a smooth $(n,0)$ form $f$, be compactly supported in $U \cap \overline{M}$. Then the following pseudolocal estimate holds:
	    	\begin{align} \label{pseudolocalestimateneg1norm}
	    		\norm{}\xi N g \norm{}_{s+\lambda} \leq C_s (\norm{}\xi_1 g \norm{}_{s} + \norm{} g \norm{}^{\dagger}_{-1}), \forall s \in \mathbb{N} \cup \{0\},
	    	\end{align}
for any $\xi, \xi_1 \in C^{\infty}_c(B(p, \frac{3}{2}\delta))$ that satisfy $\xi_1|_{W} \equiv 1$ and $\xi|_{B(p, \delta)} \equiv 1$, where $W  \subset B(p, \frac{3}{2}\delta)$ is a small neighborhood of $supp(\xi)$. The constants $\{C_s\}$ depend on $s$ and $\xi$, but not on $W$ and $\xi_1$. In the two dimensional case, $\lambda$ can be taken as $2^{-m}$ (by \cite{Kohnboundarybehavdbarweakpscvxmandim2}) and $\leq 1/m$ in higher dimensions (by \cite[Theorem 3.3]{CatlinSubellipticestimate}).
\end{claim}

\begin{remark}
\normalfont
These pseudolocal estimates essentially follow from the subelliptic estimates of the $\bar{\partial}$-Neumann operator, as mentioned in \cite{FollandKohndbarneumanntheorybook}.  In two dimensions, subelliptic estimates are proven in \cite{Kohnboundarybehavdbarweakpscvxmandim2}. In higher dimensions, subelliptic estimates for D'Angelo finite type domains in $\mathbb{C}^n$ were proved by Catlin (see \cite{CatlinDangelosubellpticestimate}, \cite{CatlinSubellipticestimate}); these are also valid in our manifold setting because of their local nature. In the following we give a proof of how these pseudolocal estimates follow from the subelliptic ones for manifolds. We also note that the proof of existence of the operator $N$ for $(n,1)$ forms is similar to that for $(0,1)$ forms.
    \end{remark}
       \begin{proof}[Proof of Claim \ref{claim21}:]
        To start, we will need the following well-known estimate:
       \begin{align}\label{localestimate}
       	  \norm{}\xi N g\norm{} _{s+\lambda} \leq C_s \norm{}\xi\norm{}_{c} (\norm{}\xi_1 g\norm{}_{s} + \norm{}\xi_1 N g\norm{}_{s}), \forall s \in \mathbb{N} \cup \{0\},
       \end{align}
       where the constant $C_s$ depends on $s$ in general and $c$ is a high enough Sobolev index depending on $s$ as well. This estimate is mentioned in \cite[Equation~(2)]{BoasExtensionofKerzman} and it essentially follows from the subelliptic estimates of the $\cdbar$-Neumann problem. We include a detailed proof of the estimate (\ref{localestimate}) in Appendix \ref{appendixA}.

       With the estimate in (\ref{localestimate}), we then invoke an iteration scheme by Boas (Appendix A in \cite{BoasiterationpaperSzegoregdom}) to establish \eqref{pseudolocalestimateneg1norm}. For the reader's convenience, we explain the details of this scheme in the proof of Lemma \ref{iterationschemeappend} of Appendix \ref{appendixB} (see also part (2) of Remark \ref{remarkB2}).

   To apply Lemma \ref{iterationschemeappend}, we let $\xi, \xi_1$ be as in Claim \ref{claim21}. Let $d:=dist(supp(\xi), supp(1-\xi_1))$ (where the distance is measured with respect to the chart $U$). We note, because of Lemma \ref{internaldomtouch}, $d < \delta <1$. Furthermore, we take $\epsilon=\lambda$, $r=s+\lambda$, $m=1$, $m'=1+2\lambda$, $L=N$, $f=\tilde{g}_{w,j}$, $R(\cdot)=\norm{} \cdot \norm{}_{s}$, $T = id$ and $G(\cdot) = \norm{}\cdot\norm{}^{\dagger}_{-1}$ in Lemma \ref{iterationschemeappend}.
   By \eqref{localestimate}, we see that the inequality \eqref{hypothesisestimate}  holds.  Note that to apply Lemma \ref{iterationschemeappend}, we also need $N$ to be a linear operator from $L^2(M)$ to itself, and that it satisfies \eqref{SobolevboundedL} with $\Omega$ replaced by $M$. This is verified in Proposition \ref{continuitydbarneumodsobolev} in Appendix \ref{appendixC}. Applying Lemma \ref{iterationschemeappend} then gives us \eqref{pseudolocalestimateneg1norm}.
       \end{proof}

    Now by our construction, $B(p, 3 \delta/2) \cap (\cd D \setminus \cd M) = \emptyset$, hence $\xi_1 \tilde{g}_{w,j} \equiv 0$ for $j$ sufficiently large. So \eqref{pseudolocalestimateneg1norm}, applied to $g=\tilde{g}_{w,j}-\tilde{g}_{w,l}$ for sufficiently large $j,l$, implies that
        \begin{align}\label{modifiedpseudolocal}
        	\norm{}\xi N (\tilde{g}_{w,j}-\tilde{g}_{w,l}) \norm{}_{s+\lambda} \leq C_s  \norm{} \tilde{g}_{w,j} -\tilde{g}_{w,l}
        \norm{}^{\dagger}_{-1},\qquad  \forall s \in \mathbb{N} \cup \{0\}.
        \end{align}
    By \eqref{convergegepsilon} and \eqref{modifiedpseudolocal}, we conclude that $\{\xi N \tilde{g}_{w,j}\}$ is a Cauchy sequence is $H_{s+\lambda}(M)$ for any $s \in \mathbb{N} \cup \{0\}$.


We shall use the following well-known result:
    \begin{claim}\label{claim22}
    	For $f \in Dom(\cdbar) \cap L^2_{n,0}(M)$,  $f - P(f) = \cdbar^*N\cdbar f$,
where $P:L^2_{n,0}(M) \to A^2(M)$ is the Bergman projection.
    \end{claim}

    \begin{proof}
Let $\mathcal{H}$ be the space of harmonic forms that are also in $L^2_{n,1}(M)$ and $H:  L^2_{n,1} \to \mathcal{H}$ the orthogonal projection. By the solution of the $\cdbar$-Neumann problem on $M$, we know that there exists a unique, smooth $(n, 1)$ form $\alpha \perp \mathcal{H}$ (with respect to the $L^2$ inner product) such that $\cdbar f= (\cdbar \cdbarst + \cdbarst \cdbar) \alpha$. Since $\mathcal{H}$, $\text{Range}(\cdbar)$ and $\text{Range}(\cdbarst)$ are mutually orthogonal spaces, we can infer that $\cdbar f$, $\cdbar \cdbarst \alpha$ both being in the range of $\cdbar$ implies $\cdbarst \cdbar \alpha = 0$. This means we have $\cdbar f= \cdbar \cdbarst \alpha$ or $\cdbar (f - \cdbarst \alpha) = 0$ or in other words $u \coloneqq  f - \cdbarst \alpha$ is a holomorphic $(n,0)$ form in $L^2_{n,0}(M)$. But we also note that $\cdbar f= (\cdbar \cdbarst + \cdbarst \cdbar) \alpha$ implies $N\cdbar f= N(\cdbar \cdbarst + \cdbarst \cdbar) \alpha = (id -H) \alpha$. Given that we assumed $\alpha \perp \mathcal{H}$, we get $\alpha = N\cdbar f$. So we have $u =  f - \cdbarst N\cdbar f$. Now we claim that $u=P(f)$, i.e., the Bergman projection of $f$. For seeing that, we note $f = u + \cdbarst N\cdbar f$ and for all $v \in A^2(M)$ we have $(v, \cdbarst N\cdbar f) = (\cdbar v, N\cdbar f) = 0$, i.e., $\cdbarst N\cdbar f \perp A^2(M)$.
\end{proof}


By this last claim,  $\tilde{f}_{w,j} - P(\tilde{f}_{w,j}) = \cdbar^*N\cdbar \tilde{f}_{w,j} = \cdbar^*N\tilde{g}_{w,j}$. Then
    \begin{align}
    	\xi(\tilde{f}_{w,j} - P(\tilde{f}_{w,j}))=\xi \cdbar^* N\tilde{g}_{w,j} = \cdbar^*(\xi N\tilde{g}_{w,j})+ [\xi,\cdbar^*](\xi_1 N\tilde{g}_{w,j})
    \end{align}
Using the above and \eqref{pseudolocalestimateneg1norm} applied to $g=\tilde{g}_{w,j}$ for sufficiently large $j$ (cf., \eqref{modifiedpseudolocal}),
we obtain
	\begin{align}\label{modifiedpseudolocal2}
		\norm{}\xi(\tilde{f}_{w,j} - P(\tilde{f}_{w,j}))\norm{}_{s+\lambda} \leq C_{s+1} \norm{} \tilde{g}_{w,j} \norm{}^{\dagger}_{-1}, \forall s \in \mathbb{N} \cup \{0\}.
	\end{align}
    \begin{claim}\label{boundednessgminus1}
    	$\norm{} \tilde{g}_w \norm{}^{\dagger}_{-1}$ is uniform bounded with respect to $w \in B(p,\delta) \cap M$. 
    \end{claim}
	\begin{proof}[Proof of Claim \ref{boundednessgminus1}:]
		Let $\sigma >0$ be small and $\rho \in C^{\infty}(M')$ be a real-valued function so that $\rho$ is compactly supported in a $2 \sigma$ neighborhood of $ \overline{\cd D \setminus \cd M}$ in $M'$, with respect to the Euclidean metric on $U$, which we denote by $V_{2 \sigma}$ and is identically one in $V_{\sigma}$. Write $K_D(z,w)=\tilde{K}_D(z,\bar{w}) \wedge d\bar{w}|_w$ for all $w \in  B(p,\delta) \cap M$.
		Fix any $\phi=\sum_{j=1}^{n}\phi_j dz_1\wedge\cdots \wedge dz_n \wedge d\bar{z}_j \in \Omega^{n,1}(\overline{M}) \cap Dom(\cdbarst)$. Since $supp(\tilde{g}_w) \subset \cd D \setminus \cd M$, by our definition of $\tilde{g}_w$ acting on $\Omega^{n,1}(\overline{M}) \cap Dom(\cdbarst)$,  we have $(\tilde{g}_w , \phi)=( \tilde{g}_w , \rho \phi)$ and
		\begin{align*}
			(\tilde{g}_w , \rho \phi) &= (\cdbar\tilde{f}_w , \rho \phi) \\
			&=(\tilde{K}_D(z,\bar{w})\chi_D(z) , \cdbar^*(\rho \phi)) \\
			&=\int_D k_D(z,\bar{w}) dz_1\wedge\cdots \wedge dz_n \wedge \overline{\cdbar^*(\rho \phi)},
		\end{align*}
	where $\tilde{K}_D(z,\bar{w})=k_D(z,\bar{w}) dz_1\wedge\cdots \wedge dz_n$ in local coordinates. 
	Assuming $\sigma$ is sufficiently small, we observe that $dist(V_{2\sigma},B(p,\delta)) >0$. By Theorem 1 in \cite{BelldiffBergker}, it follows that
	\begin{align}
		\sup_{z \in V_{2 \sigma} \cap D} |k_D(z,\bar{w})| \leq C,\qquad \forall w \in  B(p,\delta) \cap M,
	\end{align}
where $C$ is a constant independent of $w$. Thus, by the last inequality and equation above, we have
	\begin{equation}
		|(\tilde{g}_w , \phi)|=|(\tilde{g}_w , \rho \phi)| \leq C_1 \norm{}\phi\norm{}_{1},\qquad  \forall w \in  B(p,\delta) \cap M,
	\end{equation}
	where the constant $C_1$ does not depend on $w$. By definition of $\norm{}\cdot\norm{}^{\dagger}_{-1}$, the assertion in our claim is proven. 
	\end{proof}

    We observe that $P(\tilde{f}_{w,j}) \to 0$ in $L^2_{n,0}(M)$, because as we noted above, $ \tilde{f}_w \perp A^2(M)$. By Claim \ref{boundednessgminus1} and equation \eqref{modifiedpseudolocal2}, Rellich's Lemma (\cite{FollandKohndbarneumanntheorybook}, Proposition A.2.3, page 124) implies that, for each $w$, after passing to a subsequence, $\xi(\tilde{f}_{w,j_l} - P(\tilde{f}_{w,j_l}))$ converges in $H_s(M)$, for all $s \in \mathbb{N} \cup \{0\}$. By \eqref{convergegepsilon} and the observation that $P(\tilde{f}_{w,j}) \to 0$ in $L^2_{n,0}(M)$,  we conclude that $\xi(\tilde{f}_{w,j_l} - P(\tilde{f}_{w,j_l}))\to\xi \tilde{f}_w$ in $H_s(M)$. Then, we take the limit in \eqref{modifiedpseudolocal2} using this subsequence and conclude that
	\begin{align}
		\norm{}\xi \tilde{f}_w\norm{}_{s} \leq \tilde{C}_s, \ \text{for all}\ s \in \mathbb{N} \cup \{0\},
	\end{align}
	where the constant $\tilde{C}_s$ does not depend on $w \in  B(p,\delta) \cap M$.
	   \\ \textbf{Step 2:} We write $f_w(z) \coloneqq \tilde{f}_w(z) \wedge d\bar{w}|_w$ and $\tilde{g}_w=\cdbar \tilde{f}_w$ in local coordinates as before. 
	   By the reproducing property of Bergman kernels as noted above, for any $h \in A^2(M)$, we have $\int_M f_w(z) \wedge \overline{h(z)} = \overline{h(w)}-\overline{h(w)}=0$,  $\forall w \in B(p,\delta) \cap M$. Differentiating both sides with respect to $\bar{w}$ and taking differentiation under the integral (which follows from arguments before (9) in \cite{KerzmandifferentiabilityBergker}), we find that $D^{\alpha}_{\bar{w}}\tilde{f}_w \perp A^2(M)$, for any $w \in B(p, \delta) \cap M$ and any multi-index $\alpha$. By repeating a similar argument as in Step 1 above, we conclude that
	   \begin{equation}
	   	\norm{}\xi D^{\alpha}_{\bar{w}} \tilde{f}_w\norm{}_{s} \leq \tilde{C}_{s,\alpha},\qquad \forall s \in \mathbb{N} \cup \{0\}.
	   \end{equation}
   As before, the constants $\tilde{C}_{s,\alpha}$ do not depend on $w \in B(p, \delta) \cap M$. By the Sobolev embedding Theorem, it follows that
   \begin{align}
   	|\xi D^{\beta}_z D^{\alpha}_{\bar{w}} (k_M(z,\bar{w}) - k_D(z,\bar{w}))| \leq C_{\alpha, \beta},\qquad \forall \alpha, \beta,\ \forall z, w \in B(p, \delta) \cap M,
   \end{align}
    where the $C_{\alpha, \beta}$ are constants. Since $\xi|_{B(p,\delta)} \equiv 1$, the last inequality implies $k_M(z,\bar{w}) - k_D(z,\bar{w})$ is smooth up to $\big(B(p, \delta) \cap \overline{M} \big) \times \big(B(p, \delta) \cap \overline{M} \big)$ by the Sobolev embedding Theorem again. By taking $z=w \in B(p, \delta) \cap \overline{M},$ the conclusion of Theorem \ref{locBergker} follows.
	\end{proof}

	\section{The type-degree estimate: Proof of Theorem \ref{typedegreeestimate} }\label{pftypedegin}

We now utilize the localization result presented above to prove Theorem \ref{typedegreeestimate}.
 	\begin{proof}[Proof of Theorem \ref{typedegreeestimate}]
 	   	Let $\Omega$ be a precompact domain in a $2$-dimensional, normal Stein space $\Omega'$ with possibly isolated singularities. Also let $\partial \Omega$ be smooth, pseudoconvex and of finite type. Denote the set of singularities in $\Omega'$ by $Sing(\Omega')$. We can blow up the singularities in $\Omega'$, and denote by $\pi: M' \to \Omega'$ the blow down map where $M'$ is a complex manifold. Denoting the exceptional set in $M'$, i.e., $\pi^{-1}(Sing(\Omega'))$ by $E$, we then know that $\pi: M' \setminus E \to \Omega' \setminus Sing(\Omega')$ is a biholomorphism. Denote $\pi^{-1}(\Omega)$ by $M$ where $\partial M$ will be smooth, pseudoconvex and of finite type, just like $\partial \Omega$. Recall that the Bergman kernel $K_{\Omega}$ of $\Omega$ is defined to be that of its regular part. Since the Bergman kernel $K_M$ is not changed after removing a complex subvariety $E$ (see \cite{GeometryboundeddomKobayashi1959}), we have $K_{\Omega}(z,\bar{z})=K_M(z,\bar{z})$, where $z$ is from the regular part of $\Omega$.
 	   	
 	   	Fix a boundary point $\xi \in \cd \Omega$ and suppose that the Bergman kernel of $\Omega$ has algebraic degree $d \in \mathbb{N}$ in some local chart $(U, z)$ around $\xi$. Write the minimal polynomial of $K_{\Omega}$ in local coordinates $z$ as:
 	   \begin{align}\label{minpolbergker}
 	   	P(z,\bar{z},Y)=\sum_{j=0}^{d}\alpha_j(z,\bar{z})Y^j,
 	   \end{align}
 	   where $\alpha_j'$s are real polynomials for $0 \leq j  \leq d$ and $\alpha_{d} \not\equiv 0$.
 	   By the localization result, Theorem \ref{locBergker}, there exists a domain $D \subset  U \cap \Omega$ as in Lemma \ref{internaldomtouch} with $D \cap B(\xi,2 \delta)=\Omega \cap B(\xi, 2 \delta)$ for some small $\delta >0$ and $\phi(z, \bar{z}) \in C^{\infty}(B(\xi,\delta) \cap \bar{\Omega})$, such that in the local coordinates
 	   \begin{align}\label{eqnlocalize}
 	   	k_{\Omega}(z,\bar{z})=k_D(z,\bar{z})+ \phi(z, \bar{z}), ~z \in B(\xi,\delta) \cap \Omega.
 	   \end{align}
 Since the singularities in $\Omega'$ are assumed to be isolated and $\partial \Omega$ is assumed to be smooth, one can choose $U, \delta$ small enough such that there are no singularities in $U \cap \Omega$. Denote by $\rho$  a defining function of $D$, i.e., $D=\{\rho >0\}$, and let $L$ be a small real line segment that intersects $\partial D$ transversally at $\xi$. By Hsiao-Savale \cite[Theorem 2]{HsiaoSavaleBergmanexpansionfinitetype}, if we denote by $r=r(\xi)$ the type of $\cd\Omega$ at the boundary point $\xi$, then the Bergman kernel $k_D(z,\bar{z})$ has the following expansion along $D \cap L:$
 	
 	   \begin{align}\label{HsiaoSavaleexpansion}
 	    k_D(z,\bar{z}) = \rho^{-2-\frac{2}{r}} \big(\sum_{j=0}^{r-1}a_j\rho^{j/r} + C(z, \bar{z})\big) + B(z,\bar{z})\log \rho.
 	   \end{align}
  The $a_j'$s are constants with $a_0>0$, $B(z,\bar{z})$ is a real smooth function on $L$, and $C(z,\bar{z})$ is a function in $D$ near $\xi$ satisfying $C(z,\bar{z})=O(\rho)$ as $z \to \xi$ along $L$. These $a_j'$s, $B$, $C$ may all depend on the line $L$. Combining the expansion of $k_D$ and the localization result above, we see that
 	   \begin{align}\label{HsiaoSavaleexpansionforomega}
 	   	k_{\Omega}(z,\bar{z})
 	   	= \rho^{-2-\frac{2}{r}} \big(\sum_{j=0}^{r-1}a_j\rho^{j/r} + (C(z, \bar{z}) + \phi(z,\bar{z})\rho^{2+\frac{2}{r}})\big) + B(z,\bar{z})\log \rho.
 	   \end{align}
 	   This gives us an asymptotic expansion of $k_{\Omega}$ from that of $k_D$. We still denote the new coefficient $C(z, \bar{z}) + \phi(z,\bar{z})\rho^{2+\frac{2}{r}}$ by $C,$ which again satisfies $C(z,\bar{z})=O(\rho)$ as $z \to \xi$ along $L$. Then by the same arguments as the proof of Theorem 1.5 in \cite{ebenfelt2021algebraicdegfinitetype},  we get $2d  \geq r=r(\xi)$. This finishes the proof of part (i) of Theorem \ref{typedegreeestimate}.


	
To prove part (ii) of Theorem \ref{typedegreeestimate}, we first note that, by the local rationality assumption, the Bergman kernel has algebraic degree one in some local charts. Then, part (i) of Theorem \ref{typedegreeestimate} yields that the boundary of $\Omega$ must be strongly pseudoconvex. By local algebraicity of the Bergman kernel form, using the same arguments as in the proof of Proposition 5.1 of \cite{ebenfelt2020algebraicity}, we obtain that $\partial \Omega$ is locally Nash-algebraic, i.e., it is locally defined by a real algebraic function in some local coordinate chart. Fix $p \in \partial G$ and such a local coordinate chart $(U, z)$ at $p$.  Let $D \subset  U \cap \Omega$  a small smoothly bounded strongly  pseudoconvex domain  with $D \cap B(p,2 \delta)=\Omega \cap B(p, 2 \delta)$ for some small $\delta >0$ (see Lemma \ref{internaldomtouch}). By  Theorem \ref{locBergker},
there exists some $\phi(z, \bar{z}) \in C^{\infty}(B(p,\delta) \cap \bar{\Omega})$ such that
	\begin{align*}
		k_{\Omega}(z,\bar{z}) = k_{D}(z,\bar{z}) + \phi(z,\bar{z}) \ \text{on} \  D.
	\end{align*}
 Let $r$ be a Fefferman defining function of $D$ (see \cite{Feffermanasympbyfeff1974}). By the Fefferman asymptotic expansion \cite{Feffermanasympbyfeff1974}, we have
	\begin{align*}
		k_D(z,\bar{z}) = \frac{\lambda(z,\bar{z})}{r(z)^3} + \psi(z,\bar{z}) \log r(z) \ \text{on} \ D,
	\end{align*}
	where $\lambda, \psi$ are smooth functions on $D$ that extend smoothly across $B(p, \delta) \cap \partial D$. 
Consequently,
	\begin{align}\label{feffasymplocalized}
		k_{\Omega}(z,\bar{z}) = \frac{\lambda(z,\bar{z}) + \phi(z,\bar{z})r(z)^3}{r(z)^3} + \psi(z,\bar{z}) \log r(z) \ \text{on} \  D.
	\end{align}
By local rationality of Bergman kernel of $\Omega$, there exists real valued polynomials $a_1(z,\bar{z}), a_0(z,\bar{z})$ in $\mathbb{C}^2 \cong \mathbb{R}^4$ with with $a_1 \not \equiv 0$ such that
	\begin{align*}
		a_1(z,\bar{z}) k_{\Omega}(z, \bar{z}) + a_0(z,\bar{z}) = 0 \ \text{on} \ D.
	\end{align*}
	Substituting \eqref{feffasymplocalized} into this last equation and using Lemma 5.2 from \cite{ebenfelt2020algebraicity} yields that the coefficient $\psi$ of the logarithmic term vanishes to infinite order at $\partial \Omega$ near $p$. Since $\Omega$ is $2$-dimensional, by the resolution of the Ramadanov Conjecture (\cite{GrahamScalarboundaryinvariantsRamadanov} and \cite{BoutetdeMonvelRamadanov}), it follows that $\Omega$ is locally spherical near $p$. Since the boundary point $p$ was arbitrary, $\partial \Omega$ carries a compact, spherical CR structure. Now, by a theorem of Huang (Corollary 3.3 in \cite{HuangBallquotientCRlinkiisolatedcomplexsing}),  it follows $\partial \Omega$ is CR equivalent to a CR spherical space form $\partial \mathbb{B}^2/\Gamma$ where $\Gamma \subset U(2)$ is a finite group with no fixed points on $\partial \mathbb{B}^2$. By Step 3 of the proof of \cite[Theorem 1.1]{ebenfelt2020algebraicity}, we conclude that $\Omega$ is a  ball quotient $\mathbb{B}^2/\Gamma$. 
	This proves part (ii) and concludes the proof of Theorem \ref{typedegreeestimate}.
\end{proof}
\vspace{-0.2 in}
\begin{remark}
	\normalfont
    In the proof above we applied our localization result, Theorem \ref{locBergker} to obtain a Hsiao-Savale type expansion for the Bergman kernel of the Stein domain $\Omega$, see \eqref{HsiaoSavaleexpansionforomega}. It is possible that the asymptotic expansion of Hsiao-Savale in \cite{HsiaoSavaleBergmanexpansionfinitetype} applies directly in this Stein case with an adjustment of their proof. The authors have not tried to verify this. 
\end{remark}

\appendix	
\section{Proof of inequality \eqref{localestimate}}
\label{appendixA}

As in section \ref{genhuangli}, let $M$ be a smoothly bounded, pseudoconvex domain with finite type (in the sense of D'Angelo) boundary in a Hermitian manifold $(M',g')$ of dimension $n$, where $g'$ is a Hermitian metric on $M'$.  Fix $p \in \partial M$  and  a local coordinate chart $(U,z)$ at $p$ in $M'$ with $z(p)=0$. For ease of calculations, we will always work with a special boundary chart $(U \cap \overline{M})$, \cite[Section 2.2]{Straubebookl2sobolevtheory} around $p$. We shall follow the standard convention of changing the constants in an estimate in subsequent inequalities without renaming the constant in every step. We shall also use the notation $B(p, r) \coloneqq \left\{q \in U: |z(q)|=\sqrt{\sum_{j=1}^{n}|z_j(q)|^2} < r \right\}$, where we only consider $r>0$ so small that $B(p,r)\subset\subset U$.

\begin{claim}\label{claimA1}
	Let $N: L^2_{n,1}(M) \to L^2_{n,1}(M)$ be the $\bar{\partial}$-Neumann operator and $g =\cdbar f$, for a smooth $(n,0)$ form $f$, be compactly supported in $U \cap \overline{M}$. Let $p\in\partial M$ be a point of finite type. Then,  we have the following local estimate for some $\lambda, c >0$:
\begin{align} \label{pseudolocalestimateneg1norm2}
	\norm{}\xi N g\norm{}_{s+\lambda} \leq C_s \norm{}\xi\norm{}_{c} (\norm{}\xi_1 g \norm{}_{s} + \norm{}\xi_1 N g \norm{} _{s}),\qquad  \forall s \in \mathbb{N} \cup \{0\},
\end{align}
for any $\xi, \xi_1 \in C^{\infty}_c(B(p, \frac{3}{2}\delta))$ such that $\xi|_{B(p, \delta)} \equiv 1$ and $\xi_1 \equiv 1$ on a small open neighborhood of $supp(\xi)$. The constant $C_s$  depends only on $s$ and not on $\xi_1$, (and of course not on $\xi$ either, as already indicated by the expression of (\ref{pseudolocalestimateneg1norm2})).
\end{claim}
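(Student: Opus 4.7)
The plan is to prove the local estimate \eqref{localestimate} by applying the subelliptic estimate for the $\cdbar$-Neumann problem to the cutoff form $\xi u$, where $u := Ng$, and then controlling all resulting commutator terms using the hypothesis that derivatives of $\xi$ are supported where $\xi_1 \equiv 1$. Since $g = \cdbar f$, the form $g$ is $\cdbar$-closed and orthogonal to the harmonic space, so $\Box u = g$ and, because $N\cdbar = \cdbar N$ on the appropriate subspace, $\cdbar u = 0$. This simplification of the $\cdbar$-side of $u$ is the first structural payoff of the assumption $g = \cdbar f$.

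At the finite-type point $p$, Kohn's theorem in dimension two and Catlin's theorem in higher dimensions provide some $\lambda > 0$ and a subelliptic inequality
\begin{equation*}
\norm v \norm_{s+\lambda}^2 \leq C_s \bigl( \norm \cdbar v \norm_s^2 + \norm \cdbarst v \norm_s^2 + \norm v \norm_s^2 \bigr)
\end{equation*}
valid for all $(n,1)$-forms $v \in \Omega^{n,1}(\overline{M}) \cap \mathrm{Dom}(\cdbarst)$ supported in a fixed neighborhood of $p$. Multiplication by the smooth scalar $\xi$ preserves $\mathrm{Dom}(\cdbarst)$, so I may take $v = \xi u$. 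Leibniz then gives $\cdbar(\xi u) = (\cdbar \xi) \wedge u$ (using $\cdbar u = 0$) and $\cdbarst(\xi u) = \xi \cdbarst u + [\cdbarst, \xi] u$, where $[\cdbarst, \xi]$ is a zeroth-order operator whose coefficients vanish off $\mathrm{supp}(d\xi) \subset \{\xi_1 \equiv 1\}$. Hence each commutator term equals the corresponding expression with $u$ replaced by $\xi_1 u$, and a Sobolev multiplier estimate bounds them by $C_s \norm \xi \norm_c \norm \xi_1 u \norm_s$.

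The remaining term $\norm \xi \cdbarst u \norm_s$ is handled by a parallel argument applied to the $(n,0)$-form $w := \cdbarst u$: the identity $\cdbar w = g$ (from $\cdbar \cdbarst u = g$) together with the factorization $\xi \cdbarst u = \cdbarst(\xi u) - [\cdbarst,\xi] u$ and integration by parts against $\cdbar w = g$ yields a bound in terms of $\norm \xi_1 g \norm_s$ and $\norm \xi_1 u \norm_s$. To absorb the low-order summand $\norm \xi u \norm_s$ on the right of the subelliptic estimate, I use a nested-cutoff iteration: pick $\xi = \eta_0 \prec \eta_1 \prec \cdots \prec \eta_k = \xi_1$ with each $\eta_j \equiv 1$ on $\mathrm{supp}(\eta_{j-1})$, and bootstrap the above estimate through $k$ stages, gaining $\lambda$ regularity at each step from the subelliptic inequality. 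The main obstacle is making the constant $C_s$ uniform in $\xi_1$: this forces the bookkeeping to exploit that every commutator $[\cdbar^{(*)}, \eta_j]$ is zeroth order, so only a fixed high Sobolev norm of the innermost cutoff $\xi$ (and not of $\xi_1$) enters the multiplier bounds, and the number of iterations depends only on $s$ and $\lambda$.
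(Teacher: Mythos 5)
Your outline starts along the same lines as the paper (apply the subelliptic estimate to $\xi Ng$, exploit that the commutators $[\cdbar,\xi]$, $[\cdbarst,\xi]$ are zeroth order with coefficients supported where $\xi_1\equiv 1$, and use the identities $\cdbar Ng=0$, $\Box Ng=(id-H)g=g$, hence $\cdbar\cdbarst Ng=g$), but it has a genuine gap exactly where the real work lies: the bound $\norm{}\xi \cdbarst Ng\norm{}_{s}\leq C_s\norm{}\xi\norm{}_{c}(\norm{}\xi_1 g\norm{}_{s}+\norm{}\xi_1 Ng\norm{}_{s})$, and more generally the passage from the basic ($s=0$) subelliptic estimate of Kohn/Catlin to the level-$s$ inequality for forms that are merely smooth up to the boundary and in $Dom(\cdbarst)$. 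The $H_s$-norm contains normal derivatives, and normal derivatives neither preserve $Dom(\cdbarst)$ nor commute harmlessly with $\cdbar,\cdbarst$; consequently the ``integration by parts against $\cdbar w=g$'' you invoke for $w=\cdbarst Ng$ is not available as stated once $s\geq 1$, and quoting Kohn/Catlin directly for the inequality $\norm{}v\norm{}_{s+\lambda}^2\leq C_s(\norm{}\cdbar v\norm{}_s^2+\norm{}\cdbarst v\norm{}_s^2+\norm{}v\norm{}_s^2)$ skips the same issue, since applying the $s=0$ estimate to derivatives of $v$ requires those derivatives to stay in $Dom(\cdbarst)$. This is precisely why the paper works in a special boundary frame and proves Lemma \ref{straubelemma2.2gen} (expressing the normal derivative $Q$ of the coefficients in terms of tangential derivatives and the components of $\cdbar h$ and $\vartheta h$) and Lemma \ref{lemma:estimatenormaltangent}, and then carries out the long chain of tangential integrations by parts in \eqref{est1dbar1}--\eqref{est2dbar5}, using $\cdbarst\cdbar Ng=0$ and $\cdbar\cdbarst Ng=g$, while checking that every commutator has coefficients supported in $\{\xi_1\equiv 1\}$ so that $C_s$ depends on $\xi$ only through $\norm{}\xi\norm{}_c$ and not on $\xi_1$. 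Your one-sentence ``parallel argument'' is the whole content of the appendix; without an argument handling the normal-derivative/domain obstruction, the proof is incomplete.

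A secondary point: the nested-cutoff iteration you introduce to ``absorb'' $\norm{}\xi Ng\norm{}_{s}$ is unnecessary for this claim, since $\norm{}\xi Ng\norm{}_{s}=\norm{}\xi\xi_1 Ng\norm{}_{s}\leq\norm{}\xi\norm{}_{c}\norm{}\xi_1 Ng\norm{}_{s}$ is already of the allowed form; the iteration (with its delicate bookkeeping in powers of $d=dist(supp(\xi),supp(1-\xi_1))$ needed to keep constants independent of the cutoffs) belongs to the later step, Claim \ref{claim21}, where the term $\norm{}\xi_1 Ng\norm{}_{s}$ is removed in favor of $\norm{}g\norm{}^{\dagger}_{-1}$ via the scheme of Appendix \ref{appendixB}. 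As sketched, your iteration would reintroduce exactly the uniformity-in-$\xi_1$ problem you are trying to avoid, and it is not needed to reach \eqref{pseudolocalestimateneg1norm2}.
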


\begin{remark} {\rm
We may take any $c \geq n+s+1$. In the two dimensional case, $\lambda$ can be taken as $2^{-m}$ (by \cite{Kohnboundarybehavdbarweakpscvxmandim2}) and $\leq 1/m$ in higher dimensions (by \cite[Theorem 3.3]{CatlinSubellipticestimate}), where $m$ is the type of the boundary point $p$.}
\end{remark}

\begin{proof}[Proof of Claim \ref{claimA1}:]
	For this proof, let us fix some $s \in \mathbb{N} \cup \{0\}$. We first mention some key features of a special boundary chart, following \cite[Section~2.2]{Straubebookl2sobolevtheory}. Fix any $p \in \partial M$, and let $\rho$ be a local defining function of $M$ so that $M$ is defined  as $\{\rho<0\}$ near $p$.  Let $L_1, \ldots, L_{n-1}$ be orthonormal vector fields (with respect to the metric $g'$ on $M'$) near $p$ that span $\mathbb{C}T_z^{(1,0)}(\partial M_{\epsilon})$ for $z$ near $p$ and for small enough $\epsilon > 0$. Here $M_{\epsilon}=\{\rho(z) < \epsilon \}$. We can add the complex (1,0) normal field with length 1 with respect to $g'$, $L_n$, which is a smooth multiple of $\sum_{j=1}^n \frac{\partial \rho}{\partial \bar{z}_j}\frac{\partial }{\partial {z}_j}$ near $p$. Let $\omega_1, \ldots \omega_n$ be dual forms to $L_1, \ldots, L_n$. These form an orthonormal basis for $(1,0)$ forms near $p$ and are  called a \textit{special boundary (co)frame}. One can express any $(n,1)$ form on this chart $U$ in terms of a (local) orthonormal basis formed by $\{dW \wedge \bar{\omega}_j\}_{j=1}^n$ where $dW=\omega_{1}\wedge \ldots \wedge\omega_{n}$.
	
	We note that $T=\frac{1}{2i}(L_n -\bar{L}_n)$ is a real transversal (to the CR tangent bundle) tangent vector field  of $\partial M_{\epsilon}$. Moreover, $Q=\frac{1}{2}(L_n+\bar{L}_n)$ gives the real normal vector field and $\{Re(L_1), Im(L_1), \ldots Re(L_{n-1}), Im(L_{n-1}), T\}$ are real tangential vector fields. We will call the set of these tangential vector fields $S_{tang}$. We have the following important lemma, which is well-known in the field (for example, Lemma 2.2 in \cite{Straubebookl2sobolevtheory}). For notational purposes and the reader's convenience, we state and prove it here.
	
	\begin{lemma}\label{straubelemma2.2gen}
		If $M$ has smooth boundary and $h=\sum_{j=1}^n h_{j} dW \wedge \bar{\omega}_j$ is a smooth $(n,1)$ form, compactly supported in $U \cap \overline{M}$, then in a special boundary frame
		\begin{align}\label{normalderintermsofothers}
			Q(h_{j})=\sum_{D_i \in S_{tang}} \sum_{j} A_{i, j}(D_i h_{j}) + \sum_{i<k}B_{i,k} (\cdbar h)_{i,k} + C (\vartheta h)_{n,0} + \sum_{i}F_{i} h_{i}.
		\end{align}
	where $\vartheta$ is the formal $L^2$-adjoint of $\cdbar$ and the coefficients $A_{i, j}, B_{i,k}, C, F_{i} \in C^{\infty}(\overline{M})$. These coefficients do not depend on $h$. Here $(\cdbar h)_{i,k}, i < k,$ denotes the coefficient of $\cdbar h$ in $dW \wedge \bar{\omega}_i \wedge \bar{\omega}_k$ component.
	\end{lemma}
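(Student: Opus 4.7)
The plan is to decompose the complex normal field $L_n$ as $L_n = Q + iT$ and $\bar{L}_n = Q - iT$, and to read off $Qh_j$ by isolating the component of $\cdbar h$ (for $j<n$) or of $\vartheta h$ (for $j=n$) that contains the only genuinely normal derivative of $h_j$. In a special boundary frame, $L_1,\ldots,L_{n-1}$ and their conjugates are complex tangential to $\partial M$ near $p$, and $T\in S_{tang}$, so among all frame derivatives only $Q$ is normal; everything else is absorbed into the tangential sum.

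First I would compute $\cdbar h$ in the special boundary coframe. Expanding
\begin{equation*}
\cdbar h \;=\; \sum_{i,j}\bar{L}_i(h_j)\,\bar{\omega}_i\wedge dW\wedge\bar{\omega}_j \;+\; \sum_j h_j\,\cdbar(dW\wedge\bar{\omega}_j),
\end{equation*}
the last sum contributes only zero-order-in-$h$ terms with smooth coefficients, since the coframe is not holomorphic and $\cdbar dW$, $\cdbar\bar{\omega}_j$ need not vanish. Extracting the $dW\wedge\bar{\omega}_j\wedge\bar{\omega}_n$ component for $j<n$ yields, up to an overall sign,
\begin{equation*}
(\cdbar h)_{j,n} \;=\; \bar{L}_j h_n - \bar{L}_n h_j + \sum_k c^{j,n}_k\, h_k,\qquad c^{j,n}_k\in C^\infty(\overline M).
\end{equation*}
Since $\bar{L}_n = Q - iT$ with $T\in S_{tang}$, and $\bar{L}_j$ is tangential for $j<n$, solving for $Qh_j$ expresses it as a linear combination of tangential derivatives of the $h_k$'s, of $(\cdbar h)_{j,n}$, and of the $h_k$'s themselves, which is precisely the form \eqref{normalderintermsofothers}.

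For the remaining case $j=n$, I would compute the coefficient of $dW$ in the $(n,0)$-valued form $\vartheta h$. A direct local computation (integration by parts in the frame, or equivalently $\vartheta = -\bar{*}\partial\bar{*}$ on the Hermitian manifold) gives
\begin{equation*}
(\vartheta h)_{n,0} \;=\; -\sum_{j=1}^{n} L_j h_j + \sum_k e_k\, h_k,\qquad e_k\in C^\infty(\overline M),
\end{equation*}
in which only $L_n h_n$ has a normal component. Writing $L_n = Q + iT$ and solving for $Qh_n$ yields
\begin{equation*}
Qh_n \;=\; -(\vartheta h)_{n,0} - iT h_n - \sum_{j<n} L_j h_j + \sum_k e_k\, h_k,
\end{equation*}
and every explicit derivative on the right lies in $S_{tang}$. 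Combining the two cases produces the identity of the lemma, with the coefficients $A_{i,j},B_{i,k},C,F_i$ read off as the smooth, $h$-independent functions arising above.

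The only real technicality is the careful bookkeeping of the zero-order ``connection'' terms produced by $\cdbar dW$, $\cdbar\bar{\omega}_j$, and by the multiplier terms in the formal adjoint formula, all of which exist because the special boundary frame is not holomorphic. These coefficients are smooth on $\overline M$ and depend only on the frame, not on $h$, which is exactly what the lemma demands; compact support of $h$ in $U\cap\overline M$ makes the resulting local identities extend trivially to all of $M$.
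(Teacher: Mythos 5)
Your proposal is correct and follows essentially the same route as the paper's proof: expand $\cdbar h$ and $\vartheta h$ in the special boundary frame, use $Q=iT+\bar L_n$ (equivalently $\bar L_n=Q-iT$) to solve for $Q h_j$ from the relevant component of $\cdbar h$ when $j<n$, and use $Q=-iT+L_n$ together with the $dW$-coefficient of $\vartheta h$ when $j=n$, absorbing the non-holomorphic-frame terms as smooth zero-order coefficients. Your version is merely a bit more explicit (isolating the $dW\wedge\bar\omega_j\wedge\bar\omega_n$ component), but the decomposition, case split, and bookkeeping coincide with the paper's argument.
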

    \begin{proof}
    	First we note that by very similar calculations as in (2.14) and (2.16) in \cite{Straubebookl2sobolevtheory} or page 107 in \cite{ChenandShawPDESCVbook}, we can infer that with respect to a special boundary frame near $p \in \partial M$,
    	\begin{equation}\label{dbardbatsrtgenform}
    		\begin{aligned}
    			&\cdbar h= \sum_{i=1}^n \sum_{j=1}^n (\bar{L}_j h_{i}) \bar{\omega}_j \wedge dW \wedge \bar{\omega}_i + \sum_{i} h_{i} \cdbar(dW \wedge \bar{\omega}_i) \ \ \text{and} \\
    			& \vartheta{h} = (-1)^{n+1} (\sum_{j=1}^n L_j h_{j}) dW + \ \text{terms with no derivatives of }h_{j}\text{s}.
    		\end{aligned}
    	\end{equation}

    Now we break our argument into two cases. For the coefficient $h_{j}$, if we have $n \neq j$ then we write $Q(h_{j}) = \frac{1}{2}(L_n+\bar{L}_n)(h_{j})=\frac{1}{2}(L_n-\bar{L}_n)(h_{j})+ \bar{L}_n(h_{j}) = i T(h_{j}) + \bar{L}_n(h_{j})$. Then using the first identity in \eqref{dbardbatsrtgenform}, we get that $Q(h_{j})$ can be expressed in terms of a $C^{\infty}(\overline{M})$-linear combination of coefficients of $\cdbar h$, $h$ and tangential vector fields (i.e. in terms of $\{\bar{L}_i\}_{i=1}^{n-1}$ and $T$) of $h_{i}$s. On the other hand, if $n = j$ then we write $Q(h_{j}) = \frac{1}{2}(L_n+\bar{L}_n)(h_{j})=\frac{1}{2}(\bar{L}_n-L_n)(h_{j})+ L_n(h_{j}) = -i T(h_{j}) + L_n(h_{j})$. Then using the second identity in \eqref{dbardbatsrtgenform} we get that $Q(h_{j})$ can be expressed in terms of a $C^{\infty}(\overline{M})$-linear combination of coefficients of $\vartheta h$, $h$ and tangential vector fields (i.e. in terms of $\{L_i\}_{i=1}^{n-1}$ and $T$) on $h_{i}$s.
    \end{proof}
	Now, we focus our attention on the subelliptic estimates the $\cdbar$-Neumann problem. For that, we first define tangential Sobolev norms (for more details, readers are referred to Appendix 3 of \cite{FollandKohndbarneumanntheorybook}). Let us write $\mathbb{R}^m_{-}$ for the closed halfspace of points in $\mathbb{R}^m$ where the last coordinate is non-positive. We denote the first $m-1$ components by $t$ and the last component by $r$.
	\begin{definition}
		Suppose that $u \in C^{\infty}_c(\mathbb{R}^m_{-})$. The partial Fourier transform of $u$ is given by
		\begin{align*}
			\tilde{u}(\tau,r)\coloneqq \int_{\mathbb{R}^{m-1}}e^{- t \cdot \tau} u(t,r)dt.
		\end{align*}
	\end{definition}
	\begin{definition}
		Suppose that $u \in C^{\infty}_c(\mathbb{R}^m_{-})$ and $s \in \mathbb{R}$. The tangential differential operator $\Lambda^s$ and the tangential Sobolev norm $\normm{}u\normm{}_s$ of $u$ are defined as
		\begin{align*}
			(\widetilde{\Lambda^s u})(\tau,r)\coloneqq (1+|\tau|^2)^{s/2}\tilde{u}(\tau,r), \ \ \ \ \normm{}u\normm{}_s \coloneqq \norm{}\Lambda^s u\norm{}_{0, \mathbb{R}^m_{-}}.
		\end{align*}
	\end{definition}
	By using Plancherel's theorem, we note that $\normm{}u\normm{}^2_{s} = \int_{-\infty}^0 \int_{\mathbb{R}^{m-1}} (1+|\tau|^2)^{s} |\tilde{u}(\tau,r)|^2 d\tau dr$. From this definition, it is clear that $\normm{}u\normm{}^2_{s} \leq \normm{}u\normm{}^2_{s'}$ whenever $s \leq s'$. In the context of our situation, when we are in special boundary chart near $p \in \partial M$, the coordinates $t$ can be taken as the tangential coordinates and the coordinate $r$ stands for the radial or normal (to $\partial M$ near $p$) coordinate. For $(n,1)$ forms $\phi$ supported on $U \cap \overline{M}$ and expressed as
	\begin{align*}
		\phi = \sum_{j=1}^n \phi_{j} dW \wedge \bar{\omega}_j,
	\end{align*}
we let
	\begin{align*}
		\normm{}\phi\normm{}_s^2 \coloneqq \sum_{j=1}^n\normm{}\phi_{j}\normm{}^2_s.
	\end{align*}
	By the work of Catlin (\hspace{-5 pt} \cite{Catlin1983necessarysubellipticest}, \cite{CatlinSubellipticestimate}), having a finite type boundary (in the sense of D'Angelo) is the necessary and sufficient condition for the following subelliptic estimate. Although in the original works of Catlin, this estimate is established for smoothly bounded, pseudoconvex domains with finite type boundary in $\mathbb{C}^n$, we note that the estimate is purely local in nature and, hence, the same holds for such domains in complex manifolds as well. For a smooth form $u \in Dom(\cdbar) \cap Dom(\cdbarst)$ which is compactly supported in $U \cap \overline{M}$, we have:
	\begin{align}\label{subellipcatlin}
		\normm{}u\normm{}^2_{\lambda} \leq E (\norm{}\cdbar u\norm{}_{0}^2 + \norm{}\cdbarst u\norm{}_{0}^2 +  \norm{}u\norm{}^2_{0}),
	\end{align}
	where $0 \leq \lambda \leq 1/2$ is same as in the assertion of Claim \ref{pseudolocalestimateneg1norm2}. It is well-known \cite{CatlinDangelosubellpticestimate} that for such a form $u$, there is an alternate version of the subelliptic estimate, which is as follows:
	\begin{align}\label{eq:fullsobolevsubellipticseq0}
		\norm{}u\norm{}^2_{\lambda} \leq E (\norm{}\cdbar u\norm{}^2_{0}+ \norm{}\cdbarst u\norm{}^2_{0}+\norm{}u\norm{}^2_{0}).
	\end{align}
	Let $D$ be an arbitrary first order differential operator acting component-wise on $u$. Without loss of generality, we can assume $D$ is either a tangential derivative $D_T$ or the normal derivative $Q$. We first consider the case where $D=D_T$. We note that in this case, $Du$ remains in the domain of $\cdbarst$ (for details, the readers are referred to \cite[page 14]{Straubebookl2sobolevtheory}). Replacing $u$ by $D_Tu$ in \eqref{eq:fullsobolevsubellipticseq0}, we get
	\begin{align*}
		\norm{}D_Tu\norm{}^2_{\lambda}&\leq E (\norm{}\cdbar D_Tu\norm{}^2_{0}+ \norm{}\cdbarst D_Tu\norm{}^2_{0}+\norm{}D_Tu\norm{}^2_{0}).
	\end{align*}
We obtain
	\begin{align*}
		\norm{}\cdbar D_Tu\norm{}^2_{0} &\leq \norm{}D_T \cdbar u\norm{}^2_{0} + \norm{}[\cdbar, D_T]u\norm{}^2_{0},
	\end{align*}
where $[\cdbar, D_T]$ denotes the commutator of $\cdbar$ and $D_T$.
	Since $\cdbar$, $\cdbarst$, and $D_T$ are first order differential operators, the commutators $[\cdbar, D_T]$, $[\cdbarst, D_T]$ are also differential operators of order 1. 
	Consequently,
	\begin{align*}
		\norm{}\cdbar D_Tu\norm{}^2_{0} \leq E (\norm{}\cdbar u\norm{}^2_{1} + \norm{}u\norm{}^2_{1}).
	\end{align*}
A similar estimate follows for $\norm{}\cdbarst D_Tu\norm{}^2_{0}$. These inequalities imply
	\begin{align}\label{Sobolev1subeliptictangential}
	   \norm{}D_T u\norm{}^2_{\lambda} &\leq E (\norm{}\cdbar u\norm{}^2_{1} + \norm{}\cdbarst u\norm{}^2_{1} + \norm{}u\norm{}^2_{1}).
	\end{align}
If we instead let $D=Q$, then by using \eqref{normalderintermsofothers}, \eqref{Sobolev1subeliptictangential}, and  $\lambda \leq 1/2$, we obtain
	\begin{equation}\label{Sobolev1subelipticnormal}
		\begin{aligned}
			\norm{}Q{u}\norm{}^2_{\lambda} &\leq E (\norm{}D_T u\norm{}^2_{\lambda} + \norm{}\cdbar u\norm{}^2_{\lambda} + \norm{}\cdbarst u\norm{}^2_{\lambda} + \norm{}u\norm{}^2_{\lambda}) \\
			&\leq E (\norm{}\cdbar u\norm{}^2_{1} + \norm{}\cdbarst u\norm{}^2_{1} + \norm{}u\norm{}^2_{1}).
		\end{aligned}
	\end{equation}
Thus, by \eqref{Sobolev1subeliptictangential} and \eqref{Sobolev1subelipticnormal}, for a first order differential operator $D$, we have
    \begin{align*}
    	\norm{}D u\norm{}^2_{\lambda} &\leq E (\norm{}\cdbar u\norm{}^2_{1} + \norm{}\cdbarst u\norm{}^2_{1} + \norm{}u\norm{}^2_{1}).
    \end{align*}
    In other words we have $\norm{}u\norm{}^2_{1+\lambda} \leq E (\norm{}\cdbar u\norm{}^2_{1} + \norm{}\cdbarst u\norm{}^2_{1} + \norm{}u\norm{}^2_{1}).$
    Similarly, by induction we obtain for any $s \in \mathbb{N} \cup \{0\}$,
	\begin{align}\label{eq:fullsobolevsubelliptics}
		\norm{}u\norm{}^2_{s+\lambda}&\leq E (\norm{}\cdbar u\norm{}^2_{s}+ \norm{}\cdbarst u\norm{}^2_{s} + \norm{}u\norm{}^2_{s}).
	\end{align}

We now proceed to prove Claim \ref{claimA1}. We observe that if $g$ is a smooth form in $\overline{M}$ then so is $Ng$. This follows from the Sobolev regularity gain property of the operator $N$; see Proposition \ref{continuitydbarneumodsobolev}; see also \cite[Theorem 6.3]{Kohnboundarybehavdbarweakpscvxmandim2}, \cite{CatlinSubellipticestimate}, \cite[Theorem 3.4]{Straubebookl2sobolevtheory}, \cite[Theorem 3.5]{KohnDAngeloSublleipticestimateandfinitetype}. By definition of the $\cdbar$-Neumann operator $N$, we have $Ng \in Dom(\cdbar)\cap Dom(\cdbarst)$, $\cdbar Ng \in Dom(\cdbarst)$ and $\cdbarst Ng \in Dom(\cdbar)$. Since $\xi \in C^{\infty}(\overline{M})$, we further have $\xi Ng \in Dom(\cdbar)\cap Dom(\cdbarst)$ (cf. \cite[page~11-12]{Straubebookl2sobolevtheory} and \cite[page~106]{ChenandShawPDESCVbook}). Thus, we may apply \eqref{eq:fullsobolevsubelliptics} to $u=\xi Ng$, which gives us,
 for all $s \in \mathbb{N} \cup \{0\}$,
	\begin{equation}\label{imp1}
		\begin{aligned}
			\norm{}\xi N g\norm{}^2_{s+\lambda}&\leq E (\norm{}\cdbar (\xi N g)\norm{}^2_{s}+ \norm{}\cdbarst (\xi N g)\norm{}^2_{s} + \norm{}\xi N g\norm{}^2_{s}).
		\end{aligned}
	\end{equation}
For the first term on the right hand side of (\ref{imp1}), by the product rule for $\cdbar$, we have $\cdbar (\xi N g) = \cdbar \xi \wedge Ng + \xi \cdbar Ng$. For the second term, $\cdbarst (\xi N g)$, we rewrite the result of the product rule differently. Suppose that in terms of the special boundary frame on $U \cap \overline{M}$, the $(n,1)$ form $Ng$ can be written locally as $\sum_{j=1}^n h_{j} \bar{\omega}_j \wedge dW$ where $h_{j}$ are smooth functions on $U\cap \overline{M}$. Then, $\xi Ng=\sum_{j=1}^n \xi h_{j} \bar{\omega}_j \wedge dW$ in $U \cap \overline{M}$ and from \eqref{dbardbatsrtgenform}, using standard arguments, we obtain
    \begin{align*}
    	\cdbarst (\xi Ng) &= \sum_{j=1}^n(\tilde{L}_j(\xi) h_{j}) dW + \xi \cdbarst (Ng),
    \end{align*}
    where $\tilde{L}_j$s are first order differential operators. Based on these observations, we can now write \eqref{imp1} as follows,
    \begin{equation}\label{imp1.1}
    	\begin{aligned}
    		\norm{}\xi N g\norm{}^2_{s+\lambda}&\leq E (\norm{}\cdbar (\xi N g)\norm{}^2_{s}+ \norm{}\cdbarst (\xi N g)\norm{}^2_{s} + \norm{}\xi N g\norm{}^2_{s}) \\
    		& \leq E (\norm{}(\cdbar \xi) \wedge N g\norm{}^2_{s}+ \norm{}\xi (\cdbar N g)\norm{}^2_{s}+ \norm{}\sum_{j=1}^n(\tilde{L}_j(\xi) h_{j})\norm{}^2_{s}+\norm{}\xi (\cdbarst N g)\norm{}^2_{s} + \norm{}\xi N g\norm{}^2_{s}).
    	\end{aligned}
    \end{equation}
    We can estimate the first and third term in the right hand side of the last inequality as follows,
	\begin{equation}\label{imp2}
		\begin{aligned}
			\norm{}(\cdbar \xi) \wedge N g\norm{}^2_{s}+\norm{}\sum_{j=1}^n(\tilde{L}_j(\xi) h_{j})\norm{}^2_{s} &= \norm{}(\cdbar \xi)\wedge \xi_1 N g\norm{}^2_{s}+ \norm{}\sum_{j=1}^n(\tilde{L}_j(\xi) \xi_1 h_{j})\norm{}^2_{s} \\
			& \leq E \norm{}\xi\norm{}_{c}^2\norm{}\xi_1 N g\norm{}^2_{s}.
		\end{aligned}
	\end{equation}
	The last inequality holds as the $L^{\infty}$ norms of $\xi$ up to $(s+1)$th order derivatives will be bounded above by $\norm{}\xi\norm{}_c$, because of the assumption on $c$. Likewise, $\norm{}\xi N g\norm{}^2_{s} = \norm{}\xi \xi_1 N g\norm{}^2_{s}\leq E \norm{}\xi\norm{}_{c}^2 \norm{}\xi_1 Ng\norm{}^2_{s}$. Thus, it follows from \eqref{imp1.1} and \eqref{imp2} that
	\begin{equation}\label{imp3}
		\begin{aligned}
			\norm{}\xi N g\norm{}^2_{s+\lambda}& \leq E (\norm{}\xi\norm{}_{c}^2 \norm{}\xi_1 N g\norm{}^2_{s}+ \norm{}\xi (\cdbar N g)\norm{}^2_{s} + \norm{}\xi (\cdbarst N g)\norm{}^2_{s} + \norm{}\xi N g\norm{}^2_{s}) \\
			& \leq E (\norm{}\xi\norm{}_{c}^2 \norm{}\xi_1 N g\norm{}^2_{s}+ \norm{}\xi (\cdbar N g)\norm{}^2_{s} + \norm{}\xi (\cdbarst N g)\norm{}^2_{s}).
		\end{aligned}
	\end{equation}
	Now we investigate the last two terms in the right hand side of \eqref{imp3} separately. By definition of Sobolev norms of order $s \in \mathbb{N} \cup \{0\}$, we have
	\begin{align}\label{est1dbar}
		\begin{split}
			&\norm{}\xi (\cdbar N g)\norm{}^2_{s} \leq  E\sum_{|\beta|+|\gamma| \leq s} \norm{}(D^{\beta}\xi) D^{\gamma}(\cdbar N g)\norm{}^2_{0}=E \sum_{|\beta|+|\gamma| \leq s} ( (D^{\beta}\xi) D^{\gamma}(\cdbar N g), (D^{\beta}\xi) D^{\gamma}(\cdbar N g)); \\
			&\norm{}\xi (\cdbarst N g)\norm{}^2_{s} \leq E \sum_{|\beta|+|\gamma| \leq s} \norm{}(D^{\beta}\xi) D^{\gamma}(\cdbarst N g)\norm{}^2_{0}= E \sum_{|\beta|+|\gamma| \leq s} ( (D^{\beta}\xi) D^{\gamma}(\cdbarst N g), (D^{\beta}\xi) D^{\gamma}(\cdbarst N g)).
		\end{split}
	\end{align}
	where we may assume all derivatives $D^{\beta}, D^{\gamma}$ are real derivatives. To estimate $\norm{}\xi (\cdbar N g)\norm{}^2_{s}$ and $\norm{}\xi (\cdbarst N g)\norm{}^2_{s}$, we estimate $\norm{}(D^{\beta}\xi) D^{\gamma}(\cdbar N g)\norm{}^2_{0}$ and $\norm{}(D^{\beta}\xi) D^{\gamma}(\cdbarst N g)\norm{}^2_{0}$ for $|\beta|+|\gamma|\leq s$. We shall use the following lemma for estimating these terms.
	\begin{lemma}\label{lemma:estimatenormaltangent}
		Let $M,U$ be as in Claim \ref{claimA1}. Let $h$ be a $(n,1)$ form that is smooth up to the boundary and $\chi \in C_c^{\infty}(\overline{M} \cap U)$. For differential operators $P_1, P_2, \ldots, P_m \in S_{tang} \cup \{Q\}$, denoting $P=P_1P_2 \cdots P_m$, we have
		\begin{align}\label{app1lemma2eq1}
			\norm{}\chi P h\norm{}_{0}^2 \leq E \left(\sum_{|\gamma| \leq m-1}\norm{}\chi D^{\gamma} h\norm{}_{0}^2 + \sum_{|\gamma| \leq m}\norm{}\chi D^{\gamma}_{tang} h\norm{}_{0}^2 +
			\sum_{|\gamma| \leq m-1}\norm{}\chi D^{\gamma} \cdbar h\norm{}_{0}^2 + \sum_{|\gamma| \leq m-1}\norm{}\chi D^{\gamma} \cdbarst h\norm{}_{0}^2\right).
		\end{align}
		where $E$ can depend on $M,U,p$ but not on $h$. In the above, $D^{\gamma}$ runs over all the differential operators of order $|\gamma|$, $D^{\gamma}_{tang}$ runs over all the differential operators of order $|\gamma|$, consisting of tangential derivatives only. Here $P$, $D^{\gamma}$, etc., act on a form component wise.
	\end{lemma}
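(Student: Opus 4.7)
I would prove Lemma \ref{lemma:estimatenormaltangent} by induction on $m$, the number of first-order operators composing $P = P_1 \cdots P_m$. The base case $m=0$ is trivial, since $P$ is the identity and $\norm{}\chi h\norm{}_{0}^2$ appears as the $|\gamma|=0$ term in the second (tangential) sum on the right-hand side.

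For the inductive step, write $P = P' P_m$ with $P' = P_1 \cdots P_{m-1}$ of order $m-1$, and split into two cases according to whether $P_m$ is tangential or the normal derivative $Q$. If $P_m \in S_{tang}$, I apply the inductive hypothesis to $P'$ acting on the $(n,1)$ form $P_m h$, yielding a bound involving $\norm{}\chi D^\gamma(P_m h)\norm{}_{0}^2$, $\norm{}\chi D^\gamma_{tang}(P_m h)\norm{}_{0}^2$, $\norm{}\chi D^\gamma \cdbar(P_m h)\norm{}_{0}^2$, and $\norm{}\chi D^\gamma \cdbarst(P_m h)\norm{}_{0}^2$. Since $P_m$ is a first-order tangential derivative, $D^\gamma P_m$ is an operator of order $|\gamma|+1$, and $D^\gamma_{tang} P_m$ is purely tangential of order $|\gamma|+1$; these absorb directly into the first two sums at the correct ranges. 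For the $\cdbar$ and $\cdbarst$ terms, I use that $[\cdbar,P_m]$ and $[\cdbarst,P_m]$ are first-order operators with smooth coefficients, so writing $\cdbar P_m h = P_m \cdbar h + [\cdbar,P_m] h$ (and analogously for $\cdbarst$) produces, after expansion, terms $\norm{}\chi D^{\gamma'}\cdbar h\norm{}_{0}^2$ and $\norm{}\chi D^{\gamma'}\cdbarst h\norm{}_{0}^2$ together with harmless additional contributions $\norm{}\chi D^{\gamma'}h\norm{}_{0}^2$, all of order $|\gamma'|\leq m-1$.

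If $P_m = Q$, I apply Lemma \ref{straubelemma2.2gen} componentwise in the special boundary frame to rewrite $Q h_{j}$ as a $C^\infty(\overline{M})$-linear combination of tangential derivatives $D_i h_{j}$, components $(\cdbar h)_{i,k}$, the component $(\vartheta h)_{n,0}$ (which agrees with $(\cdbarst h)_{n,0}$ on smooth forms up to the boundary), and components $h_{i}$. Substituting into $(Ph)_j = P'((Qh)_j)$ and expanding via the Leibniz rule, I obtain a sum of terms of the form $(D^\alpha A)(\widetilde{P} u)$, where $A$ is one of the smooth coefficients from Lemma \ref{straubelemma2.2gen}, $\widetilde{P}$ is a product of at most $m-1$ operators from $S_{tang}\cup\{Q\}$, and $u$ is a scalar coefficient of $h$, $\cdbar h$, or $\cdbarst h$. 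Applying the inductive hypothesis componentwise to each $\norm{}\chi \widetilde{P} u\norm{}_{0}^2$, and bounding the smooth coefficients and their derivatives by their $C^{m-1}$-norm on $\mathrm{supp}(\chi)$, then yields the four sums on the right-hand side at the correct orders.

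The main technical obstacle is the order bookkeeping: one must verify that the Leibniz expansion, the commutator exchanges in Case A, and the inductive application together respect the ranges $|\gamma|\leq m-1$ for the three sums involving $h$, $\cdbar h$, $\cdbarst h$, and the range $|\gamma|\leq m$ for the purely tangential sum. The key structural feature that makes this work is that each use of Lemma \ref{straubelemma2.2gen} in Case B strictly reduces the count of $Q$ appearances in the remaining differential operator while producing only tangential derivatives together with $\cdbar h$, $\cdbarst h$, or $h$; thus the recursion terminates at a product of purely tangential operators, for which the required estimate is immediate.
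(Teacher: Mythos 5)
Your overall strategy is sound, and at bottom it is the paper's argument repackaged: the paper does not induct on $m$ but instead commutes one occurrence of $Q$ to the rightmost slot, substitutes the identity \eqref{normalderintermsofothers} of Lemma \ref{straubelemma2.2gen} for $Q(h_j)$, and repeats, each pass converting one $Q$ into tangential derivatives plus directly controlled $\cdbar h$, $\vartheta h$, $h$ terms. Your induction on the rightmost factor rests on the same two ingredients (the structure identity and the fact that commutators of first-order operators are again first order), the only genuine difference being that you must also commute $\cdbar$ and $\cdbarst$ past a tangential factor ($[\cdbar,P_m]$, $[\cdbarst,P_m]$), whereas the paper only ever commutes $Q$ with tangential fields and never moves $\cdbar$ or $\cdbarst$ at all.

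There is, however, a bookkeeping slip in your Case B at exactly the crux. After substituting \eqref{normalderintermsofothers} into $P'\bigl((Qh)_j\bigr)$ and expanding by Leibniz, the terms are not all of the form (operator of order $\leq m-1$) applied to a coefficient of $h$, $\cdbar h$ or $\cdbarst h$: the first group is $(D^{\alpha}A_{i,j})\,P''(D_i h_j)$, where $P''$ is a sub-product of $P'$ (possibly still containing $Q$) and the tangential derivative $D_i$ is attached to $h_j$, so the total order falling on $h$ can be $m$. If the terms really were as you describe, every one of them would be absorbed directly into the first, third and fourth sums of \eqref{app1lemma2eq1}, no induction would be needed, and the order-$m$ tangential sum would never arise --- which is precisely the part of the statement that carries content. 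Moreover, the inductive hypothesis cannot be ``applied componentwise'' to coefficients of $\cdbar h$ or $\cdbarst h$: the lemma is a statement about $(n,1)$ forms, and applied blindly to $\cdbar h$ it would generate terms like $\cdbarst\cdbar h$ that the right-hand side of \eqref{app1lemma2eq1} does not control; those terms need no induction since they are already in allowed form. The correct completion of your scheme is to apply the order-$(m-1)$ case to the $(n,1)$ form with components $D_i h_j$, and then perform in Case B the same commutator exchange you invoke in Case A, writing $\cdbar(D_i h)=D_i\cdbar h+[\cdbar,D_i]h$ and likewise for $\cdbarst$, so that these terms land in the third and fourth sums at order $\leq m-1$ and in the first sum at order $\leq m-1$, while the purely tangential contributions of order $m$ go into the second sum. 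Your closing remark shows you have the right mechanism in mind, so this is a fixable imprecision rather than a wrong approach, but as written the inductive step does not close.
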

	\begin{proof}
		If $P_1, \ldots P_m \in S_{tang}$, then the claim is trivial as then $\norm{}\chi P h\norm{}_{0}^2$ is definitely bounded above by $\sum_{|\gamma| \leq m}\norm{}\chi D^{\gamma}_{tang} h\norm{}_{0}^2$ as we sum over all tangential vector fields. So now we assume $P_{i_0}=Q$ for some $1 \leq i_0 \leq m$. If $i_0 < m$ then we can write $P=P_1 \cdots P_{i_0-1}P_{i_0+1}\cdots P_mQ+ P_1 \cdots P_{i_0-1}[Q,P_{i_0+1}\cdots P_m]$. Let us call $P_1 \cdots P_{i_0-1}P_{i_0+1}\cdots P_mQ$ as $\tilde{P}$ and we note that $P_1 \cdots P_{i_0-1}[Q,P_{i_0+1}\cdots P_m]$ is a differential operator of order $m-1$. If $i_0=m$ then we set $\tilde{P}=P$. Hence we have
		\begin{align}\label{app1lemma2eq2}
			\norm{}\chi P h\norm{}_{0}^2 \leq (\norm{}\chi \tilde{P} h\norm{}_{0}^2 + \sum_{|\gamma| \leq m-1} \norm{}\chi D^{\gamma} h\norm{}_{0}^2).
		\end{align}
		Write $h_j$ for the components of $h$ in terms of $dW \wedge \bar{\omega}_j$s. We may express $Q(h_{j})$s in the form given by equation \eqref{normalderintermsofothers} of Lemma \ref{straubelemma2.2gen}. We can rename $P_1,\ldots, P_{i_0-1},P_{i_0+1},\ldots P_m$ as $\tilde{P}_1, \ldots ,\tilde{P}_{m-1}$ and estimate $\chi \tilde{P}_1 \cdots \tilde{P}_{m-1}$ applied to $Q(h)$ (where $Q$ is acting coefficient wise)  using \eqref{normalderintermsofothers} as follows
		\begin{align}\label{app1lemma2eq3}
			\norm{}\chi \tilde{P} h\norm{}_{0}^2 \leq E \left(\sum_{D_i \in S_{tang}}\sum_{j} \norm{}\chi \tilde{P}_1 \cdots \tilde{P}_{m-1}(D_ih_{j})\norm{}^2_{0}+ \sum_{|\gamma| \leq m-1}\norm{}\chi D^{\gamma} h\norm{}_{0}^2 +
			\sum_{|\gamma| \leq m-1}\norm{}\chi D^{\gamma} \cdbar h\norm{}_{0}^2 + \sum_{|\gamma| \leq m-1}\norm{}\chi D^{\gamma} \cdbarst h\norm{}_{0}^2\right).
		\end{align}
		Combining \eqref{app1lemma2eq2} and \eqref{app1lemma2eq3}, we obtain
		\begin{align}\label{app1lemma2eq4}
			\norm{}\chi P h\norm{}_{0}^2 \leq E \left(\sum_{D_i \in S_{tang}}\sum_{j} \norm{}\chi \tilde{P}_1 \cdots \tilde{P}_{m-1}(D_ih_{j})\norm{}^2_{0}+ \sum_{|\gamma| \leq m-1}\norm{}\chi D^{\gamma} h\norm{}_{0}^2 +
			\sum_{|\gamma| \leq m-1}\norm{}\chi D^{\gamma} \cdbar h\norm{}_{0}^2 + \sum_{|\gamma| \leq m-1}\norm{}\chi D^{\gamma} \cdbarst h\norm{}_{0}^2\right).
		\end{align}
		Again if $\tilde{P}_1, \ldots \tilde{P}_{m-1} \in S_{tang}$ then the term $\sum_{D_i \in S_{tang}}\sum_{j} \norm{}\chi \tilde{P}_1 \cdots \tilde{P}_{m-1}(D_ih_{j})\norm{}^2_{0}$ in the right hand side of \eqref{app1lemma2eq4} can clearly be bounded above by $\sum_{|\gamma| \leq m}\norm{}\chi D^{\gamma}_{tang} h\norm{}_{0}^2$, proving our lemma. Otherwise, similarly as above, we write $\tilde{P}_1 \cdots \tilde{P}_{m-1}D_i = \hat{P}D_iQ + \ \text{a lower order differential operator}$, where $\hat{P}$ is a differential operator of order $m-2$. Proceeding exactly in a similar manner as before, we obtain
		\begin{align}\label{app1lemma2eq5}
			\norm{}\chi P h\norm{}_{0}^2 \leq E (\sum_{D_i,D_k \in S_{tang}}\sum_{j} \norm{}\chi \hat{P}(D_iD_kh_{j})\norm{}^2_{0}+ \sum_{|\gamma| \leq m-1}\norm{}\chi D^{\gamma} h\norm{}_{0}^2 +
			\sum_{|\gamma| \leq m-1}\norm{}\chi D^{\gamma} \cdbar h\norm{}_{0}^2 + \sum_{|\gamma| \leq m-1}\norm{}\chi D^{\gamma} \cdbarst h\norm{}_{0}^2).
		\end{align}
		Repeating this process at most $m-2$ times, we arrive at \eqref{app1lemma2eq1}.
	\end{proof}

\bigskip
	
	In \eqref{app1lemma2eq1}, let us take $\chi=D^{\beta}\xi$, $h=\cdbar Ng$, $P=D^{\gamma}$ with $|\beta|+|\gamma| \leq s$. This gives us
	\begin{align}\label{app1final1}
		\norm{}D^{\beta}\xi D^{\gamma}(\cdbar Ng)\norm{}_{0}^2 &\leq E \left(\sum_{|\alpha| \leq |\gamma|-1}\norm{}D^{\beta}\xi D^{\alpha}(\cdbar Ng)\norm{}_{0}^2 + \sum_{|\alpha| \leq |\gamma|}\norm{}D^{\beta}\xi D^{\alpha}_{tang}(\cdbar Ng)\norm{}_{0}^2 + \sum_{|\alpha| \leq |\gamma|-1}\norm{}D^{\beta}\xi D^{\alpha} \cdbarst \cdbar Ng\norm{}_{0}^2\right).
	\end{align}
	Similarly in \eqref{app1lemma2eq1} if we take $\chi=D^{\beta}\xi$, $h=\cdbarst Ng$, $P=D^{\gamma}$ with $|\beta|+|\gamma| \leq s$, then we obtain
	\begin{align}\label{app1final2}
		\norm{}D^{\beta}\xi D^{\gamma}(\cdbarst Ng)\norm{}_{0}^2 &\leq E \left(\sum_{|\alpha| \leq |\gamma|-1}\norm{}D^{\beta}\xi D^{\alpha}(\cdbarst Ng)\norm{}_{0}^2 + \sum_{|\alpha| \leq |\gamma|}\norm{}D^{\beta}\xi D^{\alpha}_{tang}(\cdbarst Ng)\norm{}_{0}^2 + \sum_{|\alpha| \leq |\gamma|-1}\norm{}D^{\beta}\xi D^{\alpha} \cdbar\cdbarst Ng\norm{}_{0}^2\right).
	\end{align}
	We note if we sum up the first terms in the right hand sides of \eqref{app1final1} and \eqref{app1final2} together, we get
	\begin{equation}\label{app1final3}
		\begin{aligned}
		    \sum_{\substack{\beta, \gamma \\ |\beta|+|\gamma| \leq s}}\sum_{|\alpha| \leq |\gamma|-1}(\norm{}D^{\beta}\xi D^{\alpha}(\cdbar Ng)\norm{}_{0}^2 + \norm{}D^{\beta}\xi D^{\alpha}(\cdbarst Ng)\norm{}_{0}^2) &\leq 2 \sum_{\substack{\beta, \gamma \\ |\beta|+|\gamma| \leq s}}\sum_{|\alpha| \leq |\gamma|}\norm{}D^{\beta}\xi D^{\alpha}( Ng)\norm{}_{0}^2\\
		    & \leq 2 \sum_{\substack{\beta, \gamma \\ |\beta|+|\gamma| \leq s}}\sum_{|\alpha| \leq |\gamma|}\norm{}D^{\beta}\xi D^{\alpha}(\xi_1 Ng)\norm{}_{0}^2 \\
		    & \leq C_s \norm{}\xi\norm{}_{c}^2 \sum_{\substack{\gamma \\ |\gamma| \leq s}}\norm{}D^{\gamma}(\xi_1 Ng)\norm{}_{0}^2 \\
			&= C_s \norm{}\xi\norm{}_{c}^2 \norm{}\xi_1 Ng\norm{}_{s}^2,
		\end{aligned}
	\end{equation}
where the constants $C_s$ in the end can also depend on $s$ now. We have used here the fact that $\xi_1=1$ on an open neighborhood of $supp(\xi)$.
Now we estimate the last terms on the right hand sides of \eqref{app1final1} and \eqref{app1final2}, i.e., $\sum_{|\alpha| \leq |\gamma|-1}\norm{}D^{\beta}\xi D^{\alpha} \cdbarst \cdbar Ng\norm{}_{0}^2$ and $\sum_{|\alpha| \leq |\gamma|-1}\norm{}D^{\beta}\xi D^{\alpha} \cdbar\cdbarst Ng\norm{}_{0}^2$. For doing so we recall that $g= \cdbar f$ where $f$ is a smooth $(n,0)$ form. Let us denote by $id$  the identity operator here and $H$ the projection from $L^2$ integrable forms to the harmonic forms (without mentioning the exact type of the forms every time). We recall that $\cdbar N f=N\cdbar f=Ng$ (see, e.g., \cite{FollandKohndbarneumanntheorybook}).
From this observation, we see
    \begin{equation*}
    	D^{\beta}\xi D^{\alpha} \cdbarst \cdbar Ng = D^{\beta}\xi D^{\alpha} \cdbarst \cdbar^2 Nf=0,
    \end{equation*}
and
\begin{align*}
D^{\beta}\xi D^{\alpha} \cdbar\cdbarst Ng &= D^{\beta}\xi D^{\alpha} \cdbar\cdbarst \cdbar Nf = D^{\beta}\xi D^{\alpha} \cdbar(\cdbarst \cdbar + \cdbar \cdbarst) Nf \\
    	&= D^{\beta}\xi D^{\alpha} \cdbar(id-H)Nf = D^{\beta}\xi D^{\alpha} \cdbar Nf - D^{\beta}\xi D^{\alpha} \cdbar HNf \\
    	&= D^{\beta}\xi D^{\alpha} \cdbar Nf  = D^{\beta}\xi D^{\alpha} Ng.
 \end{align*}

    As we sum over $\alpha, \beta, \gamma$, we have
    \begin{equation}\label{app1final5}
    	\begin{aligned}
    		\sum_{\substack{|\beta|+|\gamma| \leq s}} \sum_{|\alpha|\leq |\gamma|-1} \norm{}D^{\beta}\xi D^{\alpha} \cdbarst \cdbar Ng\norm{}_{0}^2 +  \norm{}D^{\beta}\xi D^{\alpha} \cdbar\cdbarst Ng\norm{}_{0}^2
    		& = \sum_{\substack{|\beta|+|\gamma| \leq s}} \sum_{|\alpha|\leq |\gamma|-1}(\norm{}D^{\beta}\xi D^{\alpha} Ng\norm{}^2_{0}) \\
    		&\leq C_s\norm{}\xi\norm{}_{c}^2\norm{}\xi_1 Ng\norm{}^2_{s}.
    	\end{aligned}
    \end{equation}
The last inequality above follows similarly as in (\ref{app1final3}). Finally we estimate the terms $\sum_{|\alpha| \leq |\gamma|}\norm{}D^{\beta}\xi D^{\alpha}(\cdbar Ng)\norm{}_{0}^2$ and $\sum_{|\alpha| \leq |\gamma|}\norm{}D^{\beta}\xi D^{\alpha}(\cdbarst Ng)\norm{}_{0}^2$ in the right hand sides of \eqref{app1final1} and \eqref{app1final2} when $D^{\alpha}$ consists of only tangential vector fields. This also includes the trivial case of $|\alpha|=0$. Based on the product rule of $\cdbar$, we obtain
	\begin{align*}
		(D^{\beta}\xi) D^{\alpha}(\cdbar N g) &= (D^{\beta}\xi) \cdbar D^{\alpha}(N g)- (D^{\beta}\xi) [\cdbar, D^{\alpha}](N g) \\
		&= \cdbar ((D^{\beta}\xi) D^{\alpha}(N g)) -  (\cdbar D^{\beta}\xi) \wedge D^{\alpha}(N g) - (D^{\beta}\xi) [\cdbar, D^{\alpha}](N g).
	\end{align*}
	We now estimate $\norm{}D^{\beta}\xi D^{\alpha}(\cdbar Ng)\norm{}_{0}^2=( (D^{\beta}\xi) D^{\alpha}(\cdbar N g), (D^{\beta}\xi) D^{\alpha}(\cdbar N g))$ by using the splitting of terms from this last equation in the second argument of this inner product and using Cauchy-Schwarz,
	\begin{equation}\label{est1dbar1}
		\begin{aligned}
			&|( (D^{\beta}\xi) D^{\alpha}(\cdbar N g), (D^{\beta}\xi) [\cdbar, D^{\alpha}](N g))| \leq sc \norm{}(D^{\beta}\xi) D^{\alpha}(\cdbar N g)\norm{}^2_{0} + lc \norm{}(D^{\beta}\xi) [\cdbar, D^{\alpha}](N g)\norm{}^2_{0}; \\
			&|( (D^{\beta}\xi) D^{\alpha}(\cdbar N g), (\cdbar D^{\beta}\xi) \wedge D^{\alpha}(N g))| \leq sc \norm{}(D^{\beta}\xi) D^{\alpha}(\cdbar N g)\norm{}^2_{0} + lc \norm{}(\cdbar D^{\beta}\xi) \wedge D^{\alpha}(N g)\norm{}^2_{0};\\
			&\text{and} \ \norm{}(\cdbar D^{\beta}\xi) \wedge D^{\alpha}(N g)\norm{}^2_{0} \leq \norm{}(D^{\beta} \cdbar \xi) \wedge D^{\alpha}(N g)\norm{}^2_{0} + \norm{}([\cdbar, D^{\beta}]\xi) \wedge D^{\alpha}(N g)\norm{}^2_{0}.
		\end{aligned}
	\end{equation}
Here and in the following, ``sc" and ``lc" stand for some appropriate small and large constants, respectively. Note that to get \eqref{est1dbar1}, we did not really use the fact that $D^{\alpha}$ is tangential. Since our intention is to estimate $\norm{}(D^{\beta}\xi) D^{\alpha}(\cdbar N g)\norm{}^2_{0}$ explicitly, the small constants that we get in the last inequalities above can be absorbed in $\norm{}(D^{\beta}\xi) D^{\alpha}(\cdbar N g)\norm{}^2_{0}$, in a standard way. We next estimate the term $( (D^{\beta}\xi) D^{\alpha}(\cdbar N g), \cdbar ((D^{\beta}\xi) D^{\alpha}(N g)))$. For that we recall $D^{\alpha},D^{\beta}$ are real derivatives and $\xi$ is a real cutoff function. Hence we can write:
	\begin{align}\label{est2dbar1}
		\begin{split}
			( (D^{\beta}\xi) D^{\alpha}(\cdbar N g), \cdbar ((D^{\beta}\xi) D^{\alpha}(N g))) &= ( D^{\alpha}(\cdbar N g), (D^{\beta}\xi) \cdbar ((D^{\beta}\xi) D^{\alpha}(N g)))\\
			&=( D^{\alpha}(\cdbar N g), \cdbar ((D^{\beta}\xi)^2 D^{\alpha}(N g))) - ( D^{\alpha}(\cdbar N g), \cdbar(D^{\beta}\xi) \wedge ((D^{\beta}\xi) D^{\alpha}(N g))).
		\end{split}
	\end{align}
	Now the $\cdbar$ on the second term of the first inner product in the right hand side of \eqref{est2dbar1} can be shifted to first term of this inner product as $\cdbarst$. This is because we know that $\cdbar N g$ is in the domain of $\cdbarst$. Since $D^{\alpha}$ is made up of purely tangential vector fields (acting coefficient wise on a form), we see $D^{\alpha} \cdbar N g$ is in the domain of $\cdbarst$ as well. Thus we rewrite \eqref{est2dbar1} as:
    \begin{equation}\label{est2dbar2}
    	\begin{split}
    		( (D^{\beta}\xi) D^{\alpha}(\cdbar N g), \cdbar ((D^{\beta}\xi) D^{\alpha}(N g))) &=( \cdbarst D^{\alpha}(\cdbar N g), (D^{\beta}\xi)^2 D^{\alpha}(N g)) - ( D^{\alpha}(\cdbar N g), \cdbar(D^{\beta}\xi) \wedge ((D^{\beta}\xi) D^{\alpha}(N g))) \\
    		&= ( (D^{\beta}\xi) \cdbarst D^{\alpha}(\cdbar N g), (D^{\beta}\xi) D^{\alpha}(N g)) - ( D^{\alpha}(\cdbar N g), \cdbar(D^{\beta}\xi) \wedge ((D^{\beta}\xi) D^{\alpha}(N g))) \\
    		&= ( (D^{\beta}\xi) D^{\alpha} \cdbarst (\cdbar N g), (D^{\beta}\xi) D^{\alpha}(N g)) - ( D^{\alpha}(\cdbar N g), (D^{\beta}\cdbar\xi) \wedge ((D^{\beta}\xi) D^{\alpha}(N g))) \\
    		&+ ( (D^{\beta}\xi) [\cdbarst, D^{\alpha}](\cdbar N g), (D^{\beta}\xi) D^{\alpha}(N g)) - ( D^{\alpha}(\cdbar N g), ([\cdbar, D^{\beta}]\xi) \wedge ((D^{\beta}\xi) D^{\alpha}(N g))) \\
    		&= ( (D^{\beta}\xi) D^{\alpha} \cdbarst (\cdbar N g), (D^{\beta}\xi) D^{\alpha}(N g)) - ( (D^{\beta}\xi) D^{\alpha}(\cdbar N g), (D^{\beta}\cdbar\xi) \wedge (D^{\alpha}(N g))) \\
    		&+ ( (D^{\beta}\xi) [\cdbarst, D^{\alpha}](\cdbar N g), (D^{\beta}\xi) D^{\alpha}(N g)) - ( (D^{\beta}\xi) D^{\alpha}(\cdbar N g), ([\cdbar, D^{\beta}]\xi) \wedge D^{\alpha}(N g) )
    	\end{split}
    \end{equation}
    Similarly, we rewrite $\norm{}D^{\beta}\xi D^{\alpha}(\cdbarst Ng)\norm{}_{0}^2=( (D^{\beta}\xi) D^{\alpha}(\cdbarst N g), (D^{\beta}\xi) D^{\alpha}(\cdbarst N g))$ in the following way:
    \begin{equation}\label{est2dbar3}
    	\begin{aligned}
    		( (D^{\beta}\xi) D^{\alpha}(\cdbarst N g), (D^{\beta}\xi) D^{\alpha}(\cdbarst N g)) &=( (D^{\beta}\xi) D^{\alpha}(\cdbarst N g), (D^{\beta}\xi) \cdbarst(D^{\alpha} N g)) + ( (D^{\beta}\xi) D^{\alpha}(\cdbarst N g), (D^{\beta}\xi) ([D^{\alpha},\cdbarst] N g)) \\
    		&=( (D^{\beta}\xi)^2 D^{\alpha}(\cdbarst N g), \cdbarst(D^{\alpha} N g)) + ( (D^{\beta}\xi) D^{\alpha}(\cdbarst N g), (D^{\beta}\xi) ([D^{\alpha},\cdbarst] N g))\\
    		&=( \cdbar((D^{\beta}\xi)^2 D^{\alpha}(\cdbarst N g)), (D^{\alpha} N g)) + ( (D^{\beta}\xi) D^{\alpha}(\cdbarst N g), (D^{\beta}\xi) ([D^{\alpha},\cdbarst] N g))\\
    		&=2 ( D^{\beta}\xi\cdbar(D^{\beta}\xi) \wedge D^{\alpha}(\cdbarst N g), (D^{\alpha} N g)) + ( (D^{\beta}\xi)^2 \cdbar(D^{\alpha}(\cdbarst N g)), (D^{\alpha} N g)) \\
    		&+ ( (D^{\beta}\xi) D^{\alpha}(\cdbarst N g), (D^{\beta}\xi) ([D^{\alpha},\cdbarst] N g)) \\
    		&=2 ( D^{\beta}\xi\cdbar(D^{\beta}\xi) \wedge D^{\alpha}(\cdbarst N g), (D^{\alpha} N g)) + ( (D^{\beta}\xi)^2 (D^{\alpha}\cdbar(\cdbarst N g), (D^{\alpha} N g)) \\
    		&+ ( (D^{\beta}\xi)^2 [\cdbar, D^{\alpha}]\cdbarst N g, (D^{\alpha} N g)) + ( (D^{\beta}\xi) D^{\alpha}(\cdbarst N g), (D^{\beta}\xi) ([D^{\alpha},\cdbarst] N g)) \\
    		&=2 ( D^{\beta}\xi\cdbar(D^{\beta}\xi) \wedge D^{\alpha}(\cdbarst N g), (D^{\alpha} N g)) + ( (D^{\beta}\xi) D^{\alpha}\cdbar(\cdbarst N g), D^{\beta}\xi D^{\alpha} N g ) \\
    		&+ ( D^{\beta}\xi [\cdbar, D^{\alpha}]\cdbarst N g, D^{\beta}\xi D^{\alpha} N g ) + ( (D^{\beta}\xi) D^{\alpha}(\cdbarst N g), (D^{\beta}\xi) ([D^{\alpha},\cdbarst] N g)).
    	\end{aligned}
    \end{equation}
    First we note that when we are estimating $\norm{}D^{\beta}\xi D^{\alpha}(\cdbar Ng)\norm{}_{0}^2 + \norm{}D^{\beta}\xi D^{\alpha}(\cdbarst Ng)\norm{}_{0}^2$ together, the term \\ $( (D^{\beta}\xi) D^{\alpha} \cdbarst (\cdbar N g), (D^{\beta}\xi) D^{\alpha}(N g))$ in the right hand side of \eqref{est2dbar2} and $( (D^{\beta}\xi) D^{\alpha}\cdbar(\cdbarst N g), D^{\beta}\xi D^{\alpha} N g )$ in the right hand side of \eqref{est2dbar3} can be added together to give us $( (D^{\beta}\xi) D^{\alpha}(id-H) g, D^{\beta}\xi D^{\alpha} N g )$. Since we assumed in the statement of Claim \ref{claimA1} that $g$ is in $\text{Range}(\cdbar)$, it is orthogonal to the harmonic $(n, 1)$ forms. So we get $(id-H)g=g$. This shows that $( (D^{\beta}\xi) D^{\alpha}(id-H) g, D^{\beta}\xi D^{\alpha} N g )=( (D^{\beta}\xi) D^{\alpha}g, D^{\beta}\xi D^{\alpha} N g )$. Let us then estimate this term and few more terms in the right hand sides of \eqref{est2dbar2} and \eqref{est2dbar3}:
    \begin{equation}\label{est2dbar4}
    	\begin{aligned}
    		|( (D^{\beta}\xi) D^{\alpha} g, D^{\beta}\xi D^{\alpha} N g )| &\leq \norm{}(D^{\beta}\xi) D^{\alpha} g\norm{}^2_{0} + \norm{}(D^{\beta}\xi) D^{\alpha} Ng\norm{}^2_{0}, \\
    		|( (D^{\beta}\xi) D^{\alpha}(\cdbar N g), (D^{\beta}\cdbar\xi) \wedge (D^{\alpha}(N g)))| &\leq sc \norm{}(D^{\beta}\xi) D^{\alpha}(\cdbar N g)\norm{}^2_{0} + lc \norm{}(D^{\beta}\cdbar\xi) \wedge (D^{\alpha}(N g))\norm{}^2_{0}, \\
    		|( (D^{\beta}\xi) [\cdbarst, D^{\alpha}](\cdbar N g), (D^{\beta}\xi) D^{\alpha}(N g))| & \leq sc \norm{}(D^{\beta}\xi) [\cdbarst,D^{\alpha}](\cdbar N g)\norm{}^2_{0} + lc \norm{}(D^{\beta}\xi) D^{\alpha}(N g)\norm{}^2_{0}, \\
    		|( (D^{\beta}\xi) D^{\alpha}(\cdbar N g), ([\cdbar, D^{\beta}]\xi) \wedge D^{\alpha}(N g) )| &\leq sc \norm{}(D^{\beta}\xi) D^{\alpha}(\cdbar N g)\norm{}^2_{0} + lc \norm{}([\cdbar, D^{\beta}]\xi) \wedge D^{\alpha}(N g)\norm{}^2_{0}, \\
    		|( D^{\beta}\xi [\cdbar, D^{\alpha}]\cdbarst N g, D^{\beta}\xi D^{\alpha} N g )| &\leq sc\norm{} D^{\beta}\xi [\cdbar, D^{\alpha}]\cdbarst N g\norm{}^2_{0} + lc \norm{}D^{\beta}\xi D^{\alpha} N g\norm{}^2_{0},\\
    		|( (D^{\beta}\xi) D^{\alpha}(\cdbarst N g), (D^{\beta}\xi) ([D^{\alpha},\cdbarst] N g))| &\leq sc \norm{}(D^{\beta}\xi) D^{\alpha}(\cdbarst N g)\norm{}^2_{0} + lc \norm{}(D^{\beta}\xi) ([D^{\alpha},\cdbarst] N g)\norm{}^2_{0}.
    	\end{aligned}
    \end{equation}
    The differential operators $[\cdbarst, D^{\alpha}]$, $[\cdbar, D^{\beta}]$, $[\cdbar, D^{\alpha}]$ are of order $|\alpha|, |\beta|$, and $|\alpha|$ respectively. The terms with small constants in the right hand sides of \eqref{est2dbar4} above, can be absorbed using the standard trick as we sum over $\alpha, \beta, \gamma$. Finally we estimate the term $2 ( D^{\beta}\xi\cdbar(D^{\beta}\xi) \wedge D^{\alpha}(\cdbarst N g), (D^{\alpha} N g))$ coming from \eqref{est2dbar3}. Since $g$ is a $(n, 1)$ form, let us write $D^{\beta}\xi D^{\alpha}\cdbarst Ng = A dW$, $D^{\alpha} Ng = \sum_{j=1}^n B_{j} \bar{\omega}_j \wedge dW$, and $\cdbar D^{\beta} \xi = \sum_{j=1}^n \bar{L}_j(D^{\beta}\xi) \bar{\omega}_j$ where $A, B_{j}$s are functions smooth up to $\overline{M}$. Note
    \begin{align*}
    	( D^{\beta}\xi\cdbar(D^{\beta}\xi) \wedge D^{\alpha}(\cdbarst N g), (D^{\alpha} N g)) &= ( \cdbar(D^{\beta}\xi) \wedge D^{\beta}\xi D^{\alpha}(\cdbarst N g), (D^{\alpha} N g)) \\
    	&= ( \sum_{j=1}^n A\bar{L}_j(D^{\beta}\xi) \bar{\omega}_j \wedge dW,  \sum_{j=1}^n B_{j} \bar{\omega}_j \wedge dW ) \\
    	&=\sum_{j=1}^n ( A\bar{L}_j(D^{\beta}\xi), B_{j} ) = \sum_{j=1}^n ( A, L_j(D^{\beta}\xi)B_{j} )\\
    \end{align*}
    By applying Cauchy-Schwarz, we obtain
    \begin{equation}\label{est2dbar5}
    	|( D^{\beta}\xi\cdbar(D^{\beta}\xi) \wedge D^{\alpha}(\cdbarst N g), (D^{\alpha} N g))| \leq sc\norm{}D^{\beta}\xi D^{\alpha}\cdbarst Ng\norm{}^2_{0} + lc\norm{}L(D^{\beta}\xi) \cdot D^{\alpha}Ng\norm{}^2_{0} 
    \end{equation}
    where by $\norm{}L(D^{\beta}\xi) \cdot D^{\alpha}Ng\norm{}^2_{0}$ we mean $\sum_{j=1}^n\norm{}L_j(D^{\beta}\xi)B_{j}\norm{}^2_{0}$ 
    Combining the estimates from \eqref{est1dbar1}, \eqref{est2dbar2}, \eqref{est2dbar3}, \eqref{est2dbar4}, \eqref{est2dbar5}, we obtain
  	\begin{equation}\label{app1final4}
  	\begin{aligned}
  		 \sum_{\substack{\beta, \gamma \\ |\beta|+|\gamma| \leq s}}\sum_{|\alpha| \leq |\gamma|}\norm{}(D^{\beta}\xi) D^{\alpha}(\cdbar N g)\norm{}^2_{0} + \norm{}(D^{\beta}\xi) D^{\alpha}(\cdbarst N g)\norm{}^2_{0}
  		 \leq & \sum_{\substack{\beta, \gamma \\ |\beta|+|\gamma| \leq s}}\sum_{|\alpha| \leq |\gamma|}E (\norm{}(D^{\beta}\xi) [\cdbar, D^{\alpha}](N g)\norm{}^2_{0} \\
  		 &+ \norm{}(D^{\beta}\cdbar \xi) \wedge D^{\alpha}(N g)\norm{}^2_{0}+ \norm{}([\cdbar, D^{\beta}] \xi) \wedge D^{\alpha}(N g)\norm{}^2_{0} \\
  		 &+ \norm{}(\cdbar D^{\beta} \xi) \wedge D^{\alpha}(N g)\norm{}^2_{0} + \norm{}(D^{\beta}\xi) [D^{\alpha}, \cdbarst](N g)\norm{}^2_{0}  \\
  		& + \norm{}(D^{\beta}\xi) D^{\alpha}g\norm{}^2_{0} + \norm{}(D^{\beta} \xi) D^{\alpha}(N g)\norm{}^2_{0} \\
  		& + \norm{}L(D^{\beta}\xi) \cdot D^{\alpha}Ng\norm{}^2_{0} \\ 
  		 \leq & \, C_s \norm{}\xi\norm{}^2_{c}(\norm{}\xi_1 Ng\norm{}^2_{s} + \norm{}\xi_1 g\norm{}^2_{s}).
  	 \end{aligned}
  \end{equation}
The last inequality above follows similarly as in (\ref{app1final3}). We combine the estimates from  \eqref{est1dbar}, \eqref{app1final1}, \eqref{app1final2},  \eqref{app1final3}, \eqref{app1final5} and \eqref{app1final4} to get:
	\begin{equation}\label{imp4}
		\begin{aligned}
			\norm{}\xi (\cdbar N g)\norm{}^2_{s} + \norm{}\xi (\cdbarst N g)\norm{}^2_{s} &\leq C_s \norm{}\xi\norm{}_{c}^2(\norm{}\xi_1 N g\norm{}^2_{s} + \norm{}\xi_1 g\norm{}^2_{s}).
		\end{aligned}
	\end{equation}
	Hence, in the end, from \eqref{imp3} and \eqref{imp4} we obtain the desired estimate:
	\begin{align*}
		\norm{}\xi N g\norm{}^2_{s+\lambda}&\leq C_s \norm{}\xi\norm{}_{c}^2(\norm{}\xi_1 N g\norm{}^2_{s} + \norm{}\xi_1 g\norm{}^2_{s}).
	\end{align*}
 \end{proof}


\section{An iteration scheme}
\label{appendixB}
For the convenience of the reader, we provide in this appendix a slightly modified version of an iteration scheme due to Boas (see Appendix A in \cite{BoasiterationpaperSzegoregdom}). The result of this scheme was used in our proof of Claim \ref{claim21}.
Suppose $\Omega$ is a smoothly bounded domain in $\mathbb{R}^N$ and $L$ a linear operator $L^2(\Omega) \to L^2(\Omega)$. Here, as before, $L^2(\Omega)$ could denote the space $L_{p,q}^2(\Omega)$ of square-integrable $(p,q)$ form on $\Omega,$ although we have omitted the $(p,q)$ in the notation. Let $R(\cdot)$ be a norm on $L^2(\Omega)$ that satisfies $R(hf) \leq \norm{}h\norm{}_{c} R(f)$ for all $h \in C^{\infty}(\bar{\Omega})$. Here $c>0$ is some constant independent of $f$ and $h$. Let $m>0$ and $G$ be a norm that dominates $\norm{} \cdot \norm{}^{\dagger}_{-m}$, $U$ an open set in $\mathbb{R}^N$ such that $\Omega \cap U \neq \emptyset$, and $T$ an operator from $L^2(\Omega)$ to itself. Assume there exists $m'>0$ such that $L$ satisfies
\begin{align}\label{SobolevboundedL}
	\norm{}L f\norm{}^{\dagger}_{-m'} \leq E 	\norm{}f\norm{}^{\dagger}_{-m}.
\end{align}
for all $f \in L^2(\Omega)$ and some $E$ independent of $f$. We also assume, for a fixed $r>0$, that $c>0$ is large enough such that $\norm{}hf\norm{}_{r} \leq \norm{}h\norm{}_{c} \norm{}f\norm{}_{r}$ and $\norm{}hf\norm{}^{\dagger}_{-m'} \leq \norm{}h\norm{}_{c} \norm{}f\norm{}^{\dagger}_{-m'}$ for all $h \in C^{\infty}(\overline{\Omega})$ and $f \in L^2(\Omega)$.
\begin{lemma}\label{iterationschemeappend}
	Let $L, R, G$ be as above. Fix $r, m, \epsilon \in \mathbb{R}_{> 0}$ such that $\epsilon \leq r$. Let $m'$ be as in \eqref{SobolevboundedL} and $c>\max\{r/3,(r-\epsilon +m')/\epsilon\}$ a constant that satisfies the conditions above. Assume there exists a constant $E >0$ such that for all $ \xi, \xi_1 \in C^{\infty}_c(U)$ with $\xi_1 \equiv 1$ in a neighborhood of $supp(\xi)$, we have:
	\begin{align}\label{hypothesisestimate}
		\norm{}\xi L f\norm{}_{r} \leq E d^{-c} \norm{}\xi\norm{}_{c} \big(R(\xi_1 T f) + \norm{}\xi_1 L f\norm{}_{r - \epsilon} \big)
	\end{align}
	for all $f$ in a subspace of $L^2(\Omega)$, where $d=dist(supp(\xi), supp(1-\xi_1)) \leq 1$. Then there is a new constant $E >0$ (independent of $\xi, \xi_1$) such that:
	\begin{equation}\label{iteratedestimate}
		\begin{aligned}
			\norm{}\xi L f\norm{}_{r} \leq E d^{-3c-2c^2} \norm{}\xi\norm{}_{c} \big(R(\xi_1 T f) + G(f) \big)
		\end{aligned}
	\end{equation}
	for $f$ in the same subspace of $L^2(\Omega)$ where $\norm{}\xi_1 L f\norm{}_{r} < \infty$.
\end{lemma}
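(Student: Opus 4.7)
My approach is Boas's iteration scheme: combine the hypothesis \eqref{hypothesisestimate} with a Sobolev interpolation inequality to replace $\norm{}\xi_1 Lf\norm{}_{r-\epsilon}$ in \eqref{hypothesisestimate} by a small multiple of $\norm{}\xi_1 Lf\norm{}_r$ plus a term involving the negative-index norm $\norm{}\cdot\norm{}^{\dagger}_{-m'}$, control the latter via the Sobolev continuity \eqref{SobolevboundedL} of $L$ together with the hypothesis that $G$ dominates $\norm{}\cdot\norm{}^{\dagger}_{-m}$, and then absorb the remaining positive-norm term by iterating along a chain of nested cutoffs between $\xi$ and $\xi_1$.

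First I would establish the interpolation bound
\begin{equation*}
\norm{}v\norm{}_{r-\epsilon} \leq \mu\norm{}v\norm{}_r + C\mu^{-\kappa}\norm{}v\norm{}^{\dagger}_{-m'},\qquad \kappa = (r-\epsilon+m')/\epsilon,
\end{equation*}
valid for every $\mu>0$, obtained from the standard interpolation $\norm{}v\norm{}_{r-\epsilon}\leq C\norm{}v\norm{}_r^{\theta}\norm{}v\norm{}_{-m'}^{1-\theta}$ with $\theta=(r-\epsilon+m')/(r+m')$, Young's inequality, and the trivial bound $\norm{}v\norm{}_{-m'}\leq \norm{}v\norm{}^{\dagger}_{-m'}$. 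Applying this with $v=\xi_1 Lf$, combined with the multiplier estimate and \eqref{SobolevboundedL}, yields $\norm{}\xi_1 Lf\norm{}^{\dagger}_{-m'}\leq C\norm{}\xi_1\norm{}_c G(f)$, and substituting into \eqref{hypothesisestimate} transforms the hypothesis into
\begin{equation*}
\norm{}\xi Lf\norm{}_r \leq Ed^{-c}\norm{}\xi\norm{}_c R(\xi_1 Tf) + Ed^{-c}\norm{}\xi\norm{}_c \mu\norm{}\xi_1 Lf\norm{}_r + CEd^{-c}\norm{}\xi\norm{}_c \norm{}\xi_1\norm{}_c\mu^{-\kappa}G(f).
\end{equation*}

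Next I would build a chain $\xi=\eta_0,\eta_1,\dots,\eta_N$ of smooth cutoffs, for $N$ a large integer to be chosen, satisfying $\eta_{j+1}\equiv 1$ on a neighborhood of $supp(\eta_j)$, consecutive transition distances $d_j\sim d/N$, $supp(\eta_N)\subset\{\xi_1\equiv 1\}$, and norms $\norm{}\eta_j\norm{}_c \lesssim (N/d)^c$ from standard bump constructions. Applying the displayed inequality to each pair $(\eta_j,\eta_{j+1})$ with $\mu_j$ chosen so that $Ed_j^{-c}\norm{}\eta_j\norm{}_c\mu_j=1/2$, and using $R(\eta_{j+1}Tf)\leq \norm{}\eta_{j+1}\norm{}_c R(\xi_1 Tf)$ together with the analogous multiplier bound for the $G(f)$ term, produces the recursion $\norm{}\eta_j Lf\norm{}_r \leq \tfrac{1}{2}\norm{}\eta_{j+1}Lf\norm{}_r + B_j[R(\xi_1 Tf)+G(f)]$ with $B_j \lesssim \norm{}\xi\norm{}_c(N/d)^{3c+2c\kappa}$. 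Iterating $N$ times yields $\norm{}\xi Lf\norm{}_r \leq 2B[R(\xi_1 Tf)+G(f)]+(1/2)^N\norm{}\eta_N Lf\norm{}_r$; the tail is bounded by $(1/2)^N(N/d)^c\norm{}\xi_1 Lf\norm{}_r$, finite by the assumption $\norm{}\xi_1 Lf\norm{}_r<\infty$, and for suitably chosen $N$ is dominated by the main term.

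The main obstacle is the precise bookkeeping needed to recover exactly the exponent $d^{-3c-2c^2}$. The hypothesis $c>\kappa$ gives $3c+2c\kappa<3c+2c^2$, matching the claim, but one must verify that (i) the multiplier estimates on $R$, $\norm{}\cdot\norm{}_r$, and $\norm{}\cdot\norm{}^{\dagger}_{-m'}$ (whose validity rests on the large-$c$ assumption) propagate along the chain with compatible constants, and (ii) the choice of $N$ balancing the geometric decay $(1/2)^N$ of the tail against the polynomial-in-$N$ growth of the constants $B_j$ can be made without producing additional powers of $d^{-1}$. The auxiliary hypothesis $c>r/3$ should enter precisely through this balance.
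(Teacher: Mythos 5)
Your overall strategy---interpolating $\norm{}\xi_1 Lf\norm{}_{r-\epsilon}$ between $\norm{}\xi_1 Lf\norm{}_{r}$ and a negative norm, controlling the latter through \eqref{SobolevboundedL} and the domination of $\norm{}\cdot\norm{}^{\dagger}_{-m}$ by $G$, and absorbing the positive-norm term along a chain of nested cutoffs---is the right one, and your interpolation step (multiplicative interpolation plus Young, with exponent $\kappa=(r-\epsilon+m')/\epsilon$) agrees with the paper's Claim \ref{sobolevinterpolationthm}. The gap is in the iteration itself. You take a finite chain of length $N$ with uniform spacing $\sim d/N$ and propose to choose $N$ so that the tail $(1/2)^N (N/d)^c \norm{}\xi_1 Lf\norm{}_{r}$ is ``dominated by the main term''. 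But $\norm{}\xi_1 Lf\norm{}_{r}$ is only assumed finite; it has no quantitative relation to $R(\xi_1 Tf)+G(f)$. Any admissible $N$ therefore has to grow like $\log\big(\norm{}\xi_1 Lf\norm{}_{r}/(R(\xi_1 Tf)+G(f))\big)$, i.e.\ it depends on $f$, and since your per-step constants $B_j\sim \norm{}\xi\norm{}_c (N/d)^{3c+2c\kappa}$ grow polynomially in $N$, the resulting ``constant'' in \eqref{iteratedestimate} also depends on $f$. That renders the estimate useless for its intended application (there $f=\tilde{g}_{w,j}$ and uniformity in $w$ and $j$ is the whole point), and letting $N\to\infty$ instead does not help, because with uniform spacing the constants $B_j$ blow up with $N$.

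The paper (following Boas) resolves exactly this tension by using an \emph{infinite, dyadically shrinking} chain $\{\eta_j\}$ inside $\{\xi_1\equiv 1\}$, with separation distances $d_j\gtrsim d\,2^{-j}$ and $\norm{}\eta_j\norm{}_c\lesssim 2^{jc}d^{-c}$, together with a $j$-dependent interpolation parameter $\kappa_j\sim d^{2c}2^{-2jc}2^{-3c-2c^2}$. For the weighted quantities $N(j)=2^{-j(3c+2c^2)}\norm{}\eta_j Lf\norm{}_{r}$ one then gets $N(j)\leq \tfrac12 N(j+1)+E' d^{-3c-2c^2}\big(R(\xi_1 Tf)+G(f)\big)$ with a $j$-independent inhomogeneous term; the hypotheses $c>r/3$ and $\norm{}\xi_1 Lf\norm{}_{r}<\infty$ are used precisely to show $N(j)\to 0$, after which the recursion sums as a geometric series and gives a bound uniform in $f$ (and in $\xi,\xi_1$). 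So to close your argument you should replace the finite uniform chain and fixed absorption coefficient by this shrinking-scale chain with geometrically weighted norms (equivalently, let your absorption parameters $\mu_j$ decay geometrically in $j$ so that the inhomogeneous term stays bounded while the weighted tail tends to zero); this is also where $c>r/3$ genuinely enters, rather than in the balance you sketch.
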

\begin{proof}[Proof of Lemma \ref{iterationschemeappend}.]
	
	Let us fix the cut-off functions $\xi$ and $\xi_1$ for which we desire to prove this lemma and $m \in \mathbb{N}$. Let us also fix a non-negative function $\phi \in C^{\infty}(\mathbb{R}^N)$, supported in the unit ball and with integral equal to one. For each positive integer $j$, set \begin{align*}
		\phi_j(x) \coloneqq (2^{j+2}d^{-1})^N \phi(2^{j+2}d^{-1}x).
	\end{align*}
	Denote by $\eta_j$ the convolution $\phi_j * \chi_{V_j}$, where $V_j \coloneqq \{x \in U~|~{dist}(x, supp(1-\xi_1)) \geq 2^{-j}d\}$.
	By standard properties of convolution, we have that $\eta_j \in C^{\infty}_c(U)$ and $supp(\eta_j) \subset \overline{supp(\phi_j) + supp(\chi_{V_j})}$.
	One can verify that $\eta_1 \equiv 1$ in a neighborhood of $supp(\xi)$ and each $\eta_{j+1}$ is identically equal to 1 in a neighborhood of $supp(\eta_j)$. The following estimate is immediate, for large $j$:
	\begin{align}\label{etajbound}
	\norm{}\eta_j\norm{}_{c}\leq E 2^{jc}d^{-c}
	\end{align}
	where the constant $E$ can depend on the derivatives of $\phi$ and $c$. Let us denote  $d_j \coloneqq dist(supp(\eta_j), supp(1-\eta_{j+1}))$. By definition of $\eta_j$s and the aforementioned property about the support of convolution, one notes that
\begin{equation*}
dist(supp(\eta_j), supp(1-\xi_1)) \geq d/2^j-d/2^{j+2}.
\end{equation*}
Also, if $x \in \Omega$ is a point such that $dist(x, supp(1-\xi_1)) \geq d/2^{j+1}+d/2^{j+3}$, then we have $\eta_{j+1}(x)=1$. Hence $1-\eta_{j+1} \equiv 0$ on $\{x \in \Omega: dist(x, supp(1-\xi_1)) \geq d/2^{j+1}+d/2^{j+3}\}$. This proves $d_j \geq  d/2^j-d/2^{j+1}-d/2^{j+2}-d/2^{j+3}=d/2^{j+3}$ which implies $d_j^{-c} \leq 8^c 2^{jc}d^{-c}$. 
Hence in the hypothesis \eqref{hypothesisestimate}, if we replace $\xi$ by $\eta_j$ and $\xi_1$ by $\eta_{j+1}$, we get :
	\begin{align}\label{apoint5boas}
		\norm{}\eta_j L f\norm{}_{r} \leq E d_j^{-c} \norm{}\eta_j\norm{}_{c} \big(R(\eta_{j+1} T f) + \norm{}\eta_{j+1} L f\norm{}_{r - \epsilon} \big) \leq E d^{-2c} 2^{2jc} \big(R(\eta_{j+1} T f) + \norm{}\eta_{j+1} L f\norm{}_{r - \epsilon} \big)
	\end{align}
	where we have used \eqref{etajbound} in the last inequality as well. We shall need the following interpolation result for Sobolev spaces.
	\begin{claim}\label{sobolevinterpolationthm}
		With $r,\epsilon,m' \in \mathbb{R}_{>0}$ fixed, if $\kappa \in (0,\infty)$, then for $\varphi \in L^2(\Omega)$
		\begin{align}\label{sobolevinterpolation}
			\norm{}\varphi\norm{}_{r-\epsilon} \leq E(\kappa \norm{}\varphi\norm{}_{r} + \kappa^{-l}\norm{}\varphi\norm{}^{\dagger}_{-m'})
		\end{align}
		where the exponent $l$ of $\kappa$ can be chosen as $(r-\epsilon +m')/\epsilon$.
	\end{claim}
	\begin{proof}
Note that we may assume that $\varphi \in H_r(\Omega)$, otherwise the conclusion is trivial.
First we prove a slightly modified version of the claim by assuming $\Omega = \mathbb{R}^N$. 
     We first show that if $w<v<u$ are real numbers, then $t^v \leq \kappa t^u + \kappa^{(v-w)/(v-u)} t^w$ for all $t \geq 1$. We see the last statement is equivalent to $t^{v-w} \leq \kappa t^{u-w} + \kappa^{(v-w)/(v-u)} = (\kappa t^{u-v})t^{v-w} + \kappa^{(v-w)/(v-u)}$. But for the latter, we note if $t \geq \kappa^{1/(v-u)}$ then our claim follows as $t^{v-w} \leq (\kappa t^{u-v})t^{v-w}$. For $t < \kappa^{1/(v-u)}$, $t^{v-w} < \kappa^{(v-w)/(v-u)}$. Once we have this observation, we take $-m'<0 \leq r-\epsilon < r$ as $w,v,u$ respectively, and take  $t=1+|\zeta|^2$, to get
		\begin{align*}
			(1+|\zeta|^2)^{r-\epsilon} \leq \kappa (1+|\zeta|^2)^r + \kappa^{-(r-\epsilon +m')/\epsilon} (1+|\zeta|^2)^{-m'}
		\end{align*}
		Choosing $l=(r-\epsilon +m')/\epsilon$, by definition of the Sobolev norm,
		\begin{align}\label{interpolationforRn}
			\norm{}\varphi\norm{}^{2}_{r-\epsilon, \mathbb{R}^N}=\int |\hat{\varphi}(\zeta)|^2 (1+|\zeta|^2)^{r-\epsilon} d \zeta \leq \kappa \int |\hat{\varphi}(\zeta)|^2 (1+|\zeta|^2)^{r} d \zeta + \kappa^{-l} \int |\hat{\varphi}(\zeta)|^2 (1+|\zeta|^2)^{-m'} d \zeta
			=\kappa \norm{}\varphi\norm{}^{2}_{r, \mathbb{R}^N} + \kappa^{-l} \norm{}\varphi\norm{}^2_{-m', \mathbb{R}^N}.
		\end{align}
Next we discuss the general case where $\Omega$ is a smoothly bounded domain in $\mathbb{R}^N$. Since our domain has smooth boundary, according to the `universal extension theorem' of Rychkov \cite[Theorem 4.1(b)]{RYCHKOV_1999} (see also \cite[Proposition~2.14]{zimingsosbolevnegpaper2022}) there exists a linear, bounded extension operator $\mathcal{E} : H_s(\Omega) \to H_s(\mathbb{R}^N)$, independent of the Sobolev index such that $\norm{}\mathcal{E} u\norm{}_{s, \mathbb{R}^N} \leq E \norm{}u\norm{}_{s}$ for all $s \in \mathbb{R}$,  where the constant $E$ may depend on $s$ but is independent of $u$. Now if we apply the extension operator on our $\varphi \in C^{\infty}(\bar{\Omega})$, we note that the support of $\mathcal{E}\varphi$ may not be compact anymore, but since we have the inequality \eqref{interpolationforRn} for any function $\varphi \in H_r(\mathbb{R}^n)$, we obtain
		\begin{align*}
			\norm{}\mathcal{E} \varphi\norm{}^{2}_{r-\epsilon, \mathbb{R}^N} \leq \kappa \norm{}\mathcal{E}\varphi\norm{}^{2}_{r, \mathbb{R}^N} + \kappa^{-l} \norm{}\mathcal{E} \varphi\norm{}^{2}_{-m', \mathbb{R}^N}.
		\end{align*}
		By boundedness of the universal extension operator $\mathcal{E}$ and definition of the Sobolev norm on a domain, the last inequality gives us
		 		\begin{align}\label{extensioninterpolation}
		 	\norm{}\varphi\norm{}^{2}_{r-\epsilon} \leq \norm{}\mathcal{E} \varphi\norm{}^{2}_{r-\epsilon, \mathbb{R}^N} \leq \kappa \norm{}\mathcal{E}\varphi\norm{}^{2}_{r, \mathbb{R}^N} + \kappa^{-l} \norm{}\mathcal{E} \varphi\norm{}^{2}_{-m', \mathbb{R}^N} \leq  E (\kappa \norm{}\varphi\norm{}^{2}_{r} + \kappa^{-l} \norm{}\varphi\norm{}^{2}_{-m'}).
		 \end{align}
		 Here the constant $E$ is independent of the choice of $\varphi$. Using the fact that $\norm{}\varphi\norm{}_{-m'} \leq \norm{}\varphi\norm{}^{\dagger}_{-m'}$  and replacing $\kappa$ by $\kappa^2$ in (\ref{extensioninterpolation}), we get

\begin{align*}
	\norm{}\varphi\norm{}^{2}_{r-\epsilon}  \leq  E (\kappa^2 \norm{}\varphi\norm{}^{2}_{r} + \kappa^{-2l} (\norm{}\varphi\norm{}^{\dagger}_{-m'})^{2}).
\end{align*}
Since for $a, b \geq 0,$ it holds that $a^2+b^2 \leq (a+b)^2$, (\ref{sobolevinterpolation}) follows immediately.

	\end{proof}
	\begin{remark}\label{Sobolevinterpolmanifold}
		\normalfont
		Let $M$ be a smoothly bounded, precompact domain $M$ in a complex Hermitian manifold $(M', g')$ and $h$ be a smooth $(p,q)$ form on $\overline{M}$. Claim \ref{sobolevinterpolationthm} still holds with $\varphi$ replaced by $h$ and $\Omega$ replaced by $M$. To see that we recall the definition of Sobolev norms from the beginning of Section \ref{genhuangli}. Let $\{U_{\gamma}\}_{\gamma \in \Gamma}$ be a locally finite covering of $M'$ by charts with coordinate mappings $\psi_{\gamma}: U_{\gamma} \to \mathbb{C}^n$, and let $\{\xi_{\gamma}\}$ be a partition of unity subordinate to $\{U_{\gamma}\}$. Fix any $U_{\gamma}$ with $U_{\gamma} \cap M \neq \emptyset.$ Let $\{\omega_{\gamma}^i\}_{i \in I}$ be a point wise orthonormal basis of $(p,q)$ forms with respect to the metric $g'$ over $U_{\gamma}$. If $h=\sum_{i \in I}h^{\gamma}_i \omega_{\gamma}^i$ in $U_{\gamma}$, we first apply \eqref{extensioninterpolation} to $\varphi = (\xi_{\gamma}h^{\gamma}_i)\circ \psi_{\gamma}^{-1}$ on $\Omega=\psi_{\gamma}(U_{\gamma})$ (where if $U_{\gamma} \not\subset M$ then we take $\varphi$ to be a Rychkov extension of $(\xi_{\gamma}h^{\gamma}_i)\circ \psi_{\gamma}^{-1}$). By noting that each $\norm{}(\xi_{\gamma}h^{\gamma}_i)\circ \psi_{\gamma}^{-1}\norm{}_{-m'}$ is bounded by some constant multiple of $\norm{}h\norm{}_{-m'}$, by summing over $\gamma, i$ (and making the constant $E$ large enough), we have:
		\begin{align*}
			\norm{}h\norm{}^2_{r-\epsilon, M} \leq E(\kappa \norm{}h\norm{}^2_{r, M} + \kappa^{-l}\norm{}h\norm{}^2_{-m', M}).
			\end{align*}
The remaining argument is the same as above.
\end{remark}

\bigskip

Using Claim \ref{sobolevinterpolationthm}, for any $\kappa \in (0,1)$ along with our assumption on $c$ (which yields $c>l$) and \eqref{apoint5boas} we get that
	\begin{align}\label{apoint6boas}
		\norm{}\eta_j L f\norm{}_{r} &\leq E d^{-2c} 2^{2jc}  \big(R(\eta_{j+1} T f) +  \kappa \norm{}\eta_{j+1} L f\norm{}_{r} + \kappa^{-c}\norm{}\eta_{j+1} L f\norm{}^{\dagger}_{-m'}\big).
	\end{align}
	We will make a special choice of $\kappa \in (0,1)$ later. From the assumption on $c$, we obtain that
	
	$$\norm{}\eta_{j+1} L f\norm{}^{\dagger}_{-m'} \leq \norm{}\eta_{j+1}\norm{}_{c} \norm{}L f\norm{}^{\dagger}_{-m'} \leq E 2^{jc+c}d^{-c} \norm{}L f\norm{}^{\dagger}_{-m'} \leq E 2^{jc}d^{-c} \norm{}f\norm{}^{\dagger}_{-m}.$$
Since $\xi_1 \equiv 1$ in a neighborhood of $supp(\eta_{j+1})$, by the assumption on the norm $R$ we conclude that
	\begin{equation*}
		R(\eta_{j+1}Lf) = R(\eta_{j+1}\xi_1Lf) \leq\norm{}\eta_{j+1}\norm{}_{c} R(\xi_1Lf) \leq E 2^{jc+c}d^{-c} R(\xi_1Lf).
	\end{equation*}
	Also, since $G(f)$ dominates $\norm{}f\norm{}^{\dagger}_{-m}$,  \eqref{apoint6boas} becomes
	\begin{equation}\label{apoint8boas}
		\begin{aligned}
			\norm{}\eta_j L f\norm{}_{r} & \leq E d^{-3c} 2^{3jc} \kappa^{-c} \big(R(\xi_1 T f) + G(f)\big) + E d^{-2c} 2^{2jc}  \kappa \norm{}\eta_{j+1} L f\norm{}_{r}.
		\end{aligned}
	\end{equation}
	Here we have also used that $\kappa \in (0,1)$. We note that $E$ depends on $L,m,$ and $c$ but is independent of $\kappa, j$, and $f$.  Now, we choose $\kappa$ to be :
	\begin{align*}
		\kappa \coloneqq \frac{1}{2E}d^{2c}2^{-2jc}2^{-3c-2c^2}
	\end{align*}
	Obviously $\kappa \in (0,1)$ for $j$ large. Let us also write $N(j) \coloneqq 2^{-j(3c+2c^2)} \norm{}\eta_{j} L f\norm{}_{r}$. With these definitions, \eqref{apoint8boas} becomes:
	\begin{equation}\label{apoint9boas}
		\begin{aligned}
			N(j) \leq \frac{1}{2}N(j+1) + E^{c+1}d^{-3c-2c^2}2^{c+3c^2+2c^3} \big(R(\xi_1 T f) + G(f)\big).
		\end{aligned}
	\end{equation}
	Because of our assumption on $c$ and the fact that $\xi_1 \equiv 1$ in a neighborhood of $supp(\eta_j)$, we have as $j \to \infty$:
	\begin{align*}
		N(j) \coloneqq \frac{\norm{}\eta_{j} L f\norm{}_{r}}{2^{j(3c+2c^2)}}=\frac{\norm{}\eta_{j} \xi_1 L f\norm{}_{r}}{2^{j(3c+2c^2)}} \leq E \frac{2^{jr}d^{-r}}{2^{j(3c+2c^2)}}\norm{}\xi_1 L f\norm{}_{r} \to 0
	\end{align*}
	Here we also used \eqref{etajbound} and the assumption that  $\norm{}\xi_1 L f\norm{}_{r} < \infty$. Hence \eqref{apoint9boas} implies, for all $j$:
	\begin{align*}
		N(j) \leq E^{c+1}d^{-3c-2c^2}2^{c+3c^2+2c^3} \big(R(\xi_1 T f) + G(f)\big) (1+1/2+1/4+\ldots) = E^{c+1}d^{-3c-2c^2}2^{1+c+3c^2+2c^3} \big(R(\xi_1 T f) + G(f)\big).
	\end{align*}
	We may now put $j=j_0$ where $j_0 \geq 1$ is fixed and large, and use the definition of $N(j)$ to obtain
	\begin{equation}\label{hopefullylaststepiter}
		\norm{}\eta_{j_0} L f\norm{}_{r} \leq E^{c+1}d^{-3c-2c^2}2^{1+c+3c^2+2c^3+3j_0c^2+2j_0c^2} \big(R(\xi_1 T f) + G(f)\big).
	\end{equation}
	Since $\eta_{j_0} \equiv 1$ in a neighborhood of $supp(\xi)$, we note that $\norm{}\xi L f\norm{}_{r} =\norm{}\xi \eta_{j_0} L f\norm{}_{r}$. We estimate $\norm{}\xi \eta_{j_0} L f\norm{}_{r}$ from above by $\norm{}\xi\norm{}_{c}\norm{}\eta_{j_0} L f\norm{}_{r}$. Combining this with \eqref{hopefullylaststepiter} gives us the desired conclusion.
\end{proof}

\begin{remark}\label{remarkB2} {\rm We conclude this appendix with two remarks.
	\begin{enumerate}
		\normalfont
		\item One can actually prove a stronger version of this lemma where we start with $\norm{}\xi L f\norm{}_{r} \\ \leq E d^{-c} \norm{}\xi\norm{}_{c} \big(R(\xi_1 T f) + \norm{}\xi_1 L f\norm{}_{r - \epsilon}+ G(f) \big)$ instead of inequality \eqref{hypothesisestimate} and obtain the same conclusion. The proof of that is identical to the one above with a possible enlargement of the constant $E$ at various places.
		\item Although Lemma \ref{iterationschemeappend} is only formulated and proven for a domain $\Omega \subset \mathbb{C}^n$, the same conclusion also holds if $\Omega$ is a smoothly bounded, precompact domain $M$ in a complex manifold $M'$. There $U$ would be an open subset of $M'$ lying in some coordinate chart of $M'$ such that $U \cap M \neq \emptyset$. The proof is the same with Claim \ref{sobolevinterpolationthm} replaced by the general version in Remark \ref{Sobolevinterpolmanifold}.
	\end{enumerate}}
\end{remark}

\section{Proof of continuity of $N$ on the $H_s^{\dagger}(M)$ Sobolev spaces}
\label{appendixC}
 As in section \ref{genhuangli}, let $M$ be a precompact domain  with smooth pseudoconvex boundary of finite type (in the sense of D'Angelo) in a Hermitian manifold $(M', g')$ of dimension $n$. In the proof of Claim \ref{claim21} we need to use Lemma \ref{iterationschemeappend}in Appendix \ref{appendixB}. We need to take the operator $L$ in Appendix \ref{appendixB} to be the $\cdbar-$Neumann operator $N$ and, hence, we need to establish that $N$ satisfies \eqref{SobolevboundedL} (with $\Omega$ replaced by $M$). We prove this here in detail.

\begin{proposition}\label{continuitydbarneumodsobolev}
	For $s \geq 0$, $\norm{}Nu\norm{}_{s+2\lambda} \leq C_s \norm{}u\norm{}_{s}$ for $u \in H_s(M)$, where $\lambda$ is the subelliptic gain index in \eqref{pseudolocalestimateneg1norm}.  For $s > 0,$ we also have $\norm{}Nu\norm{}^{\dagger}_{-s} \leq C_s \norm{}u\norm{}^{\dagger}_{-s - 2 \lambda}$ for $u \in L^2_{p,q}(M) \subset H_{-s}^{\dagger}(M)$.
\end{proposition}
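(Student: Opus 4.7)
The plan is to establish the positive-index estimate $\|Nu\|_{s+2\lambda}\leq C_s\|u\|_s$ directly from Catlin's subelliptic estimates, and then deduce the $\|\cdot\|^\dagger_{-s}$-bound by duality using the self-adjointness of $N$.

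For the positive case, I would start from the basic subelliptic estimate on $M$ (valid because the relevant estimates from \cite{Kohnboundarybehavdbarweakpscvxmandim2, CatlinSubellipticestimate} are local), namely $\|\phi\|_\lambda^2\leq C(\|\bar\partial\phi\|_0^2+\|\bar\partial^*\phi\|_0^2+\|\phi\|_0^2)$ for $\phi\in Dom(\bar\partial)\cap Dom(\bar\partial^*)$. Setting $\phi=Nu$ and using $\bar\partial^*\bar\partial Nu+\bar\partial\bar\partial^*Nu=(I-H)u$ together with $HNu=0$, integration by parts gives $\|\bar\partial Nu\|_0^2+\|\bar\partial^* Nu\|_0^2=(u,Nu)\leq \|u\|_0\|Nu\|_0\leq C\|u\|_0^2$, so that $\|Nu\|_\lambda\leq C\|u\|_0$. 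To upgrade from a gain of $\lambda$ to a gain of $2\lambda$, and to propagate it to higher $s$, I would invoke the standard Kohn--Catlin bootstrap that appears, e.g., in \cite{FollandKohndbarneumanntheorybook, Straubebookl2sobolevtheory, CatlinSubellipticestimate}: apply the subelliptic estimate to $\xi Nu$ with suitable cutoffs and tangential derivatives, control the commutators $[\bar\partial,\xi]$, $[\bar\partial^*,\xi]$ and $[D_T,\bar\partial^{(*)}]$ as in the proof of Claim \ref{claimA1} (Appendix \ref{appendixA}), and iterate. The arguments are purely local and carry over verbatim from the $\bC^n$ setting to the manifold setting.

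For the negative-index estimate, I would use the fact that $N$ is self-adjoint on $L^2_{p,q}(M)$. For $u\in L^2_{p,q}(M)$ and any admissible test form $h\in\Omega^{p,q}(\overline M)\cap Dom(\bar\partial^*)$, one has $(Nu,h)=(u,Nh)$. I would then verify that $Nh$ is again admissible: the regularity just proved, applied at every Sobolev level, combined with Sobolev embedding, shows $Nh\in C^\infty(\overline M)$; and $Nh\in Dom(\Box)\subset Dom(\bar\partial^*)$ by construction of $N$. Thus $Nh$ may be used to test the $\|\cdot\|^\dagger_{-s-2\lambda}$ norm of $u$. By the definition of $\|\cdot\|^\dagger$ and the positive-index estimate applied to $Nh$,
\begin{equation*}
|(Nu,h)|=|(u,Nh)|\leq \|u\|^\dagger_{-s-2\lambda}\,\|Nh\|_{s+2\lambda}\leq C_s\|u\|^\dagger_{-s-2\lambda}\,\|h\|_s.
\end{equation*}
Taking the supremum over $h$ with $\|h\|_s=1$ gives $\|Nu\|^\dagger_{-s}\leq C_s\|u\|^\dagger_{-s-2\lambda}$.

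The genuine work is all in the first half: making the bootstrap argument rigorous in the finite-type, manifold context, where the subelliptic gain $\lambda$ may be strictly less than $1/2$ and so the elliptic regularization/iteration trick (choosing tangential difference quotients and dealing with the normal direction via Lemma \ref{straubelemma2.2gen}) has to be done with care. Once this is in hand, the duality step for the second assertion is essentially formal, the only point to check being that $Nh$ lies in the class of test forms defining $\|\cdot\|^\dagger_{-s-2\lambda}$.
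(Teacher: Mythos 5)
Your proposal is correct and follows essentially the same route as the paper: the positive-index bound rests on the local subelliptic estimates plus the classical Kohn--Nirenberg regularity theory for $\Box$ (the paper packages the $2\lambda$-gain as Claim \ref{laplaceesti}, applied to $f=Nu$ with $\Box Nu=(id-H)u$, and controls the harmonic term via the finite-dimensionality of harmonic forms in Claim \ref{finiteharmonic}), and the negative-index bound is the identical duality argument using self-adjointness of $N$ together with the fact that $Nh\in C^\infty(\overline M)\cap Dom(\cdbarst)$ for smooth $h$. The only difference is presentational: where you delegate the upgrade from gain $\lambda$ to $2\lambda$ (and the higher-order treatment of $Hu$) wholesale to the standard references, the paper isolates these as Claims \ref{laplaceesti} and \ref{finiteharmonic}, likewise citing Kohn--Nirenberg for the former without proof.
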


    \begin{proof}
		First, we prove the first part of the proposition for $s \geq 0$. Let us only prove it for $s =k \lambda$ where $k \in \mathbb{N} \cup \{0\}$, for simplicity. After that, the general $s \geq 0$ case follows from the interpolation theorem for operators on Sobolev spaces \cite[Theorem~B.3]{ChenandShawPDESCVbook}. We note that, by G\aa rding's inequality (\cite[Theorem~2.2.1]{FollandKohndbarneumanntheorybook}, \cite[Proposition~5.1.1]{ChenandShawPDESCVbook}), if $\eta \in C_c^{\infty}(M)$ is supported on an interior chart, then for any $f \in Dom(\cdbar) \cap Dom(\cdbarst)$, $$\norm{}\eta f\norm{}^2_{1} \leq E(\norm{}\cdbar f\norm{}^2_{0} + \norm{}\cdbarst f\norm{}^2_{0}+\norm{}f\norm{}^2_{0}),$$ and by \ref{eq:fullsobolevsubellipticseq0} of Appendix \ref{appendixA} it follows that, for $\eta$ supported in a boundary chart, we have
       	\begin{align*}
        	\norm{}\eta f\norm{}^2_{\lambda} \leq E (\norm{}\cdbar f\norm{}^2_{0}+ \norm{}\cdbarst f\norm{}^2_{0}+\norm{}f\norm{}^2_{0}).
        \end{align*}		
We shall complete the proof of the proposition by first proving two claims.
Suppose $f \in Dom(\cdbar) \cap Dom(\cdbarst)$. Since our manifold $M$ is precompact in $M'$, we can take a finite chart $\{U_i\}_{i=1}^N$ covering $\overline{M}$ and a partition of unity subordinate to this chart. Then according to the definition of Sobolev spaces in Section \ref{genhuangli} and the last two inequalities above,
		\begin{align}\label{globalsubelliptic}
			\norm{}f\norm{}^2_{\lambda, M} \leq E (\norm{}\cdbar f\norm{}^2_{0}+ \norm{}\cdbarst f\norm{}^2_{0}+\norm{}f\norm{}^2_{0}).
		\end{align}
		\begin{claim}\label{finiteharmonic}
			With $M$ as above of dimension $n$, for any $0 \leq p \leq n$ and $1 \leq q \leq n$, the space of harmonic $(p,q)$ forms $\mathcal{H}_{p,q}$ is finite dimensional.
		\end{claim}
		\begin{proof}
			Since $f \in \mathcal{H}_{p,q}$ if and only if $\cdbar f = \cdbarst f=0$, we have from \eqref{globalsubelliptic} that $\norm{}f\norm{}^2_{\lambda, M} \leq E \norm{}f\norm{}^2_{0}$ when $f \in \mathcal{H}_{p,q}$. Since, by Rellich Lemma \cite[Theorem~A.8 and Page~340]{ChenandShawPDESCVbook}, $H_{\lambda}(M)$ is compactly embedded in $L^2(M)$, we see from this last inequality that the $L^2$-unit ball in $\mathcal{H}_{p,q}$ is compact in $L^2(M)$. Thus $\mathcal{H}_{p,q}$ is finite dimensional.
		\end{proof}
Next, we state, without proof, a claim that is well-known in the field.
		\begin{claim}\label{laplaceesti}
			Let $\xi, \xi_1$ be real cut-off functions compactly supported near a point $p \in \overline{M}$ with $\xi_1|_{supp(\xi)} \equiv 1$. For $f \in Dom(\Box)$, where $\Box= \cdbar\cdbarst+\cdbarst\cdbar$, we have  for $s\geq 0$,
$$\norm{}\xi f\norm{}^2_{s+2\lambda} \leq E (\norm{}\xi_1 \Box f\norm{}^2_{s}+ \norm{}f\norm{}^2_{0}).
$$
		\end{claim}
		\begin{remark}
			\normalfont This follows from the classical work of Kohn-Nirenberg \cite{KohnNirenbergNonCoerciveboundaryvalue}. The proof is standard and readers are referred to (6.3), or rather the first inequality of page 472 of \cite{KohnNirenbergNonCoerciveboundaryvalue}. Note that in (6.3) ({\em loc.\ cit.}) the first two norms from the left are tangential Sobolev norms but on pages 475-476 of \cite{KohnNirenbergNonCoerciveboundaryvalue}, the authors indicate how to go from tangential Sobolev norm to total Sobolev norm, as we require here. Similar techniques of proof can also be found in the proof of Theorem 2.1.7 of \cite{FollandKohndbarneumanntheorybook}.
		\end{remark}

Now we are in a position to complete the proof of Proposition \ref{continuitydbarneumodsobolev}. Since $M$ is precompact in $M'$ take a finite covering $\{U_i\}_{i=1}^N$ of $\overline{M}$ and a partition of unity $\{\phi_i\}_{i=1}^N$ subordinate to $\{U_i\}_{i=1}^N$. Let us take cut-off functions $\{\psi_i\}_{i=1}^N$ such that each $\psi_i$ is compactly supported in $U_i$ and $\psi_i \equiv 1$ on a neighborhood of $supp(\phi_i)$.  Let $u \in H_s(M)$ and note that $Nu \in Dom(\Box)$. In the estimate of Claim \ref{laplaceesti}, putting $\xi=\phi_i, \xi_1= \psi_i$ and $f=Nu$, we obtain
        \begin{eqnarray*}
        	\begin{aligned}
        		\norm{}\phi_i Nu\norm{}^2_{s+2\lambda} &\leq E (\norm{}\psi_i \Box Nu\norm{}^2_{s}+ \norm{}Nu\norm{}^2_{0}) \\
        		& \leq E (\norm{}\psi_i \Box Nu\norm{}^2_{s}+ \norm{}u\norm{}^2_{0}) \\
        		&\leq E (\norm{}\psi_i (id-H)u\norm{}^2_{s}+ \norm{}u\norm{}^2_{0}) \\
        		&\leq E (\norm{}\psi_i u\norm{}^2_{s}+ \norm{}\psi_i H u\norm{}^2_{s}+ \norm{}u\norm{}^2_{0}) \\
        		&\leq E (\norm{}\psi_i u\norm{}^2_{s}+ \norm{}H u\norm{}^2_{s}+ \norm{}u\norm{}^2_{0}) \\
        		&\leq E (\norm{}\psi_i u\norm{}^2_{s}+ \norm{}H u\norm{}^2_{0}+ \norm{}u\norm{}^2_{0}) \\
        		&\leq E (\norm{}\psi_i u\norm{}^2_{s}+ \norm{}u\norm{}^2_{0}) \leq E \norm{}u\norm{}^2_{s}.
        	\end{aligned}
        \end{eqnarray*}
        The second inequality follows because the $\cdbar$-Neumann operator $N$ is a bounded operator from $L^2(M)$ to itself. The third inequality follows as $\Box N = id -H$, where $id$ is the identity operator and $H$ is the $L^2$ projection operator to the $L^2$ harmonic forms. The sixth inequality above follows as $Hg$ is a harmonic form and, as proved in Claim \ref{finiteharmonic}, the space of harmonic forms is finite dimensional and, hence, all norms on it are equivalent. The seventh inequality follows as $H$ is an $L^2$ projection operator. According to the definition of the Sobolev norms on $M$ in Section \ref{genhuangli}, the above proves the first part of Proposition \ref{continuitydbarneumodsobolev}. For proving the second part, we recall that, for $s>0$, we have defined $\norm{}u\norm{}_{-s}^{\dagger}=\sup_{g \in C^{\infty}_{p,q}(\overline{M}) \cap Dom(\cdbarst)} |(u,g)|/\norm{}g\norm{}_{s}$. For any $g \in C^{\infty}_{p,q}(\overline{M}) \cap Dom(\cdbarst),$ we note that, for $s >0$,
		\begin{align*}
			|(Nu,g)| =|(u,Ng)| &\leq \norm{}u\norm{}^{\dagger}_{-s-2\lambda} \norm{}Ng\norm{}_{s+ 2 \lambda} \\
			&\leq E \norm{}u\norm{}^{\dagger}_{-s-2\lambda} \norm{}g\norm{}_{s}.
		\end{align*}
		Here the first equality follows as the $\cdbar$-Neumann operator $N$ is self-adjoint. The inequality after follows from the definition of $\norm{} \cdot \norm{}^{\dagger}_{-s-2\lambda}$ and the fact that if $g \in C^{\infty}(\overline{M})$ then $Ng \in C^{\infty}(\overline{M}) \cap Dom(\cdbarst)$. The last inequality follows from the first assertion of Proposition \ref{continuitydbarneumodsobolev}, which we just proved. The constant $E$ is independent of $g$. Hence we have $\norm{}Nu\norm{}^{\dagger}_{-s} \leq E \norm{}u\norm{}^{\dagger}_{-s-2\lambda}$, as claimed.
	\end{proof}
	
\bibliography{weakpscvxbiblio}
\bibliographystyle{amsalpha.bst}
\end{document}